\documentclass[11pt]{article}
\usepackage[T1]{fontenc}
\usepackage[utf8]{inputenc}
\usepackage{lmodern}
\usepackage[a4paper,margin=28mm]{geometry}
\usepackage{amsmath,amssymb,amsthm,mathtools}
\usepackage{microtype}
\usepackage[numbers,sort&compress]{natbib}
\usepackage{hyperref}
\hypersetup{hidelinks,pdftitle={A lifted entropy splitting lemma and applications},
  pdfauthor={Gabriel Coutinho and Luis Higino}}
\numberwithin{equation}{section}
\newtheorem{theorem}{Theorem}[section]
\newtheorem{lemma}[theorem]{Lemma}
\newtheorem{proposition}[theorem]{Proposition}
\newtheorem{corollary}[theorem]{Corollary}
\theoremstyle{remark}

\newcommand{\R}{\mathbb R}
\newcommand{\C}{\mathbb C}
\newcommand{\Smat}{\mathbb S}
\newcommand{\one}{\mathbf 1}
\newcommand{\ThetaBody}{\operatorname{TH}}
\newcommand{\abl}{\operatorname{abl}}
\newcommand{\diag}{\operatorname{diag}}
\newcommand{\Diag}{\operatorname{Diag}}
\newcommand{\tr}{\operatorname{tr}}

\newcommand{\ip}[2]{\langle #1,#2\rangle}
\newcommand{\cart}{\mathbin{\square}}
\newcommand{\cK}{\mathcal K}
\newcommand{\cC}{\mathcal C}
\newcommand{\cE}{\mathcal E}
\newcommand{\Psd}{\operatorname{Psd}}
\newcommand{\supp}{\operatorname{supp}}

\title{A lifted entropy splitting lemma and applications}
\author{Gabriel Coutinho\thanks{Departamento de Ci\^encia da Computa\c{c}\~ao,
Universidade Federal de Minas Gerais, Belo Horizonte, Brazil.
E-mail: \href{mailto:gabriel@dcc.ufmg.br}{\texttt{gabriel@dcc.ufmg.br}}.}
\and Luis Higino\thanks{Departamento de Ci\^encia da Computa\c{c}\~ao,
Universidade Federal de Minas Gerais, Belo Horizonte, Brazil.
E-mail: \href{mailto:luis.higino@dcc.ufmg.br}{\texttt{luis.higino@dcc.ufmg.br}}.}}
\date{}

\begin{document}
\maketitle
\begin{abstract}
We give a matrix lifting of the entropy-splitting theorem of Csisz\'ar, K\"orner, Lov\'asz, Marton, and Simonyi, and show several applications. We use it to prove the Cartesian-product inequality conjectured by Sinjorgo and Sotirov for the generalized Lov\'asz theta number, and we also answer the saturation and convexification questions of Carli Silva, Coutinho, Oliveira, and Tun\c{c}el on the same theme. For the strong Hoffman bound, we answer a question of Wocjan, Elphick, and Darbari concerning orthogonal rank, and show that optimization over  matrix weights gives exactly the ceiling of the theta number of the complement. Finally, we extend the lifting to diagonally invariant cones of positive semidefinite matrices, giving a proof of a conjecture by Kuryatnikova, Sotirov, and Vera, and giving a short proof, for all graphs, of a theta-based weakening of Fajtlowicz's graph-energy conjecture.
\end{abstract}

\section{Lifted entropy splitting lemma}\label{sec:normalization}

Entropy splitting relates a convex corner to its antiblocker through coordinatewise products. Our starting point is that, for theta bodies, this relation can be lifted to their matrix certificates. The resulting matrix simultaneously satisfies a lower bound of rank one, an upper bound by the identity, and the zero pattern of a graph. These three properties connect several semidefinite and spectral graph parameters.

Throughout, $G=(V,E)$ is a finite simple graph with $n=|V|\geq1$. We write $\overline G$ for its complement, $\Smat^V$ for the real symmetric matrices indexed by $V$, and $\Smat_+^V$ for the positive semidefinite cone. We use $\one$ for the all-ones vector, $J=\one\one^T$, and $\ip{A}{B}=\tr(AB)$. The notations $\diag(A)$ and $\Diag(a)$ denote the diagonal vector extracting the diagonal of the matrix $A$ and the diagonal matrix with diagonal given by the vector $a$, respectively. If $p \in \R_+^V$, we write $\sqrt{p}$ for the vector obtained by coordinatewise square root. The product $A\circ B$ is the Schur product. Matrix inequalities $A\preceq B$ refer to the positive semidefinite order; $A\geq0$ means entrywise nonnegativity.

\subsection{Entropy splitting and theta bodies}

A \emph{convex corner} is a compact, convex, full-dimensional set $C\subseteq\R_+^V$ such that $0\leq z\leq x\in C$ implies $z\in C$. Its antiblocker is
\[
 \abl(C)=\{y\in\R_+^V:x^Ty\leq1\text{ for every }x\in C\}.
\]
The following is the entropy splitting theorem of Csisz\'ar, K\"orner, Lov\'asz, Marton, and Simonyi.

\begin{theorem}[{\cite[Theorem~1]{CsiszarEtAl1990}}]\label{thm:entropy} Let $C\subseteq\R_+^V$ be a convex corner. For every vector $p\in\R_+^V$ with $\one^Tp=1$, there are $x\in C$ and $y\in\abl(C)$ such that $x_i y_i=p_i$ for every $i\in V$.
\end{theorem}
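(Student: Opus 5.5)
The plan is to obtain $x$ as a maximizer of the entropy-type functional
\[
 \Phi(x)=\sum_{i\in V} p_i\log x_i
\]
over $C$, and then to read off $y$ from the first-order optimality conditions.

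First, the maximizer exists. Since $C$ is full-dimensional and down-closed, it contains a vector with all coordinates strictly positive, so $\Phi$ is finite somewhere on $C$. Let $S=\supp(p)$. On $C$ the functional $\Phi$ is upper semicontinuous with values in $[-\infty,\infty)$, and it only involves the coordinates in $S$. By compactness it attains its maximum at some $x\in C$, and this $x$ has $x_i>0$ for all $i\in S$. Because $C$ is a corner, nothing is lost by this choice of $x$.

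Next, define $y_i=p_i/x_i$ for $i\in S$ and $y_i=0$ otherwise. Then $y\ge0$ and $x_iy_i=p_i$ for every $i$, including $i\notin S$, where both sides vanish. It remains to show $y\in\abl(C)$, i.e.\ $z^Ty\le1$ for every $z\in C$. Take $z\in C$ and $t\in(0,1]$. By convexity $x+t(z-x)\in C$, and by maximality
\[
 0\ \ge\ \Phi(x+t(z-x))-\Phi(x)=\sum_{i\in S}p_i\log\Bigl(1+t\,\tfrac{z_i-x_i}{x_i}\Bigr).
\]
For $i\in S$ we have $x_i>0$, so each term is finite for small $t$. Divide by $t$ and let $t\to0^+$; the directional derivative gives
\[
 \sum_{i\in S}p_i\,\frac{z_i-x_i}{x_i}\le 0,
 \qquad\text{i.e.}\qquad
 \sum_{i\in S}\frac{p_i}{x_i}z_i\le\sum_{i\in S}p_i=1.
\]
This is exactly $z^Ty\le1$.

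The only delicate point is handling zero coordinates: $p_i=0$ must be allowed, and $z_i$ may vanish. Restricting $\Phi$ to $S$ and using $z_i\ge0$ handles both. Convexity of $C$ is what makes the first-order condition global, so no separation argument is needed, and down-closedness is not used beyond existence. (Concavity of $\Phi$ also gives uniqueness of $x$ on $S$, but this is not required.)
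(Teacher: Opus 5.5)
Your proof is correct. Maximizing $\sum_{i\in S}p_i\log x_i$ over $C$, setting $y_i=p_i/x_i$, and using the one-sided directional derivative toward each $z\in C$ is the classical Csisz\'ar--K\"orner--Lov\'asz--Marton--Simonyi argument via the minimizer of $H_C(p)=\min_{x\in C}-\sum_i p_i\log x_i$, and you handle the zero coordinates of $p$ properly. The paper does not reprove this theorem but cites it from that source, so there is no separate proof of its own to compare against.
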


The theta body of $G$ is
\begin{equation}\label{eq:theta-body}
 \ThetaBody(G)=\bigl\{x\in\R_+^V:\ \exists A\in\Smat^V,
  \ A\succeq xx^T,\ \diag A=x,\ A_{ij}=0\ (ij\in E)\bigr\}.
\end{equation}
It is a convex corner. We use the classical identities
\begin{equation}\label{eq:theta-duality}
 \vartheta(G;w)=\max_{x\in\ThetaBody(G)}w^Tx
 =\max\{\sqrt w^{\,T}X\sqrt w:X\succeq0,\ \tr X=1,
                  \ X_{ij}=0\ (ij\in E)\},
\end{equation}
for $w\geq0$, and
\begin{equation}\label{eq:antiblocking}
 \abl(\ThetaBody(G))=\ThetaBody(\overline G).
\end{equation}
Here $\vartheta(G)=\vartheta(G;\one)$ is the Lov\'asz theta number. These formulations and the antiblocking identity belong to the theta-body theory of Lov\'asz and Gr\"otschel--Lov\'asz--Schrijver \cite{Lovasz1979,GroetschelLovaszSchrijver1986}; see also \cite{Knuth1994,DeCarliSilvaTuncel2017}.

\subsection{The matrix lifting for the theta body}

In Section~\ref{sec:cone-lifting}, we show a more general version of the matrix lifting for convex corners. Here we present the specific case for the theta body, which will be used in several subsequent applications.

\begin{lemma}\label{lem:normalization}
Let $G=(V,E)$ and let $p\in\R_+^V$ satisfy $\one^Tp=1$. There is a matrix $R\in\Smat^V$ such that
\begin{equation}\label{eq:normalization}
 \sqrt p\,\sqrt p^{\,T}\preceq R\preceq I,\qquad
 R_{ij}=0\ (ij\in E),\qquad
 \diag R\in\ThetaBody(\overline G).
\end{equation}
In particular, $R\sqrt p=\sqrt p$ and $\tr R\leq\vartheta(\overline G)$.
\end{lemma}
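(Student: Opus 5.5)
Apply Theorem~\ref{thm:entropy} to $C=\ThetaBody(G)$; by \eqref{eq:antiblocking} this gives $x\in\ThetaBody(G)$ and $y\in\ThetaBody(\overline G)$ with $x_iy_i=p_i$ for all $i$. Take a certificate $A\succeq xx^T$ with $\diag A=x$ and $A_{ij}=0$ for $ij\in E$. The plan is to rescale $A$ coordinatewise by $D=\Diag(d)$, $d_i=\sqrt{y_i/x_i}$, and set $R=DAD$. Then $R_{ij}=0$ for $ij\in E$. Moreover $R\succeq Dxx^TD=\sqrt p\,\sqrt p^{\,T}$, because $d_ix_i=\sqrt{x_iy_i}=\sqrt{p_i}$. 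Finally $\diag R=(d_i^2x_i)_i=y\in\ThetaBody(\overline G)$. Coordinates with $x_i=0$ (hence $p_i=0$) need separate care. Since $A\succeq xx^T$ and $A_{ii}=x_i=0$, row $i$ of $A$ vanishes, so we may set $R_{ii}=y_i$ and $R_{ij}=0$ for $j\neq i$. This keeps $R$ PSD-above and the zero pattern, and since $y_i\le 1$ (as $\ThetaBody(\overline G)\subseteq[0,1]^V$), the bound $R\preceq I$ survives on that block. Thus we may reduce to $x>0$, possibly after perturbing $p$ and passing to a limit using compactness of all the constraints.

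\textbf{Main obstacle: $R\preceq I$.} This is where the antiblocking pairing has to enter, and I would use the optimality structure of entropy splitting rather than just its conclusion. In the proof of Theorem~\ref{thm:entropy}, $x$ maximizes $\sum_i p_i\log x_i$ over $C$. Optimality gives $\sum_i p_i z_i/x_i\le 1$ for all $z\in C$, i.e.\ $y=p/x$ satisfies $z^Ty\le1$ on $\ThetaBody(G)$. So I would use the variational characterization of $x$, not merely the existence of the pair.

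To show $\lambda_{\max}(DAD)\le1$, I would argue by contradiction. Suppose $u$ is a unit vector with $u^TDADu>1$. Build from $A$ and $u$ a new matrix $A'=\Diag(s)A\Diag(s)$ with $s_i=u_id_i/\sqrt{\cdot}$, suitably normalized. Its Gram-type vector $z=\diag A'$-weighted combination should lie in $\ThetaBody(G)$ and give $z^Ty>1$. More cleanly, I would use \eqref{eq:theta-duality}: $\lambda_{\max}(DAD)=\max_u u^TDADu$, and $\vartheta(G;w)\ge \sqrt w^TX\sqrt w$ for any feasible $X$. A natural feasible $X$ is $\Diag(u)\,(\cdot)$ built from $A$, normalized by its trace. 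This should give $u^TDADu\le \vartheta(G;y)$-type quantities. Those are at most $1$ because $y\in\abl(\ThetaBody(G))$ means $\max_{z\in\ThetaBody(G)}y^Tz\le1$. Expanding $u^TDADu=\sum_{ij}u_iu_jd_id_jA_{ij}$ and realizing it as $\sqrt w^TX\sqrt w$ with $X=A\circ(\text{rank-one})/\tr$ is the step I expect to need the most adjustment. If a direct choice fails, I would first try $X=\Diag(v)A\Diag(v)/\ip{A}{\Diag(v)^2}$ with $v$ chosen so that $\sqrt w=\Diag(v)^{-1}Du$. I would then optimize via Cauchy--Schwarz, using $\diag A=x$ and $x_iy_i=p_i$.

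\textbf{Consequences.} Given \eqref{eq:normalization}, $R\sqrt p=\sqrt p$ follows at once. We have $\sqrt p^{\,T}R\sqrt p\ge(\sqrt p^{\,T}\sqrt p)^2=1$ and $R\preceq I$, so $\sqrt p^{\,T}(I-R)\sqrt p=0$. Since $I-R\succeq0$, this forces $(I-R)\sqrt p=0$. Also $\tr R=\one^T\diag R\le\vartheta(\overline G;\one)=\vartheta(\overline G)$ by \eqref{eq:theta-duality} applied to $\overline G$.
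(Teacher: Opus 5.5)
Your matrix $R=DAD$ with $d_i=\sqrt{y_i/x_i}$ is exactly the paper's matrix when $p>0$. Your checks of $R\succeq\sqrt p\,\sqrt p^{\,T}$, the edge zeros, $\diag R=y$, the reduction to $p>0$ by perturbation and a limit, and the two consequences are all fine.

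The gap is the step you flag yourself: $R\preceq I$ is never established. The contradiction construction with $s_i=u_id_i/\sqrt{\cdot}$ is left unspecified. ``$\vartheta(G;y)$-type quantities'' does not fix which weight or normalization is meant. The trace of your candidate $X$, which is exactly what the bound hinges on, is never computed. The detour through log-optimality of $x$ and Cauchy--Schwarz is also unnecessary, since $y\in\abl(\ThetaBody(G))$ already comes from Theorem~\ref{thm:entropy} and \eqref{eq:antiblocking}.

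Your last fallback does close the gap once it is made explicit. For $x>0$ and a unit vector $u$, take $X=\Diag(u/\sqrt x)\,A\,\Diag(u/\sqrt x)$.
\begin{itemize}
\item $X$ is positive semidefinite and vanishes on $E$. Signs of $u$ are harmless, because any diagonal congruence preserves both properties.
\item $\tr X=\sum_i u_i^2A_{ii}/x_i=1$, because $\diag A=x$.
\item $\sqrt y^{\,T}X\sqrt y=\sum_{i,j}u_id_iA_{ij}d_ju_j=u^TRu$.
\end{itemize}
Hence \eqref{eq:theta-duality} gives $u^TRu\leq\vartheta(G;y)=\max_{z\in\ThetaBody(G)}y^Tz\leq1$, the last step by antiblocking, so $R\preceq I$.

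Once completed, your route differs from the paper's. The paper also takes a certificate $B$ for $y\in\ThetaBody(\overline G)$ and applies the Schur product theorem. It uses $A\circ(B-yy^T)\succeq0$, while the complementary zero patterns give $A\circ B=\Diag(p)$ and $A\circ yy^T=\Diag(y)A\Diag(y)$. This yields $\Diag(y)A\Diag(y)\preceq\Diag(p)$, and conjugating by $\Diag(p)^{\dagger/2}$ gives $R\preceq I$. The pseudoinverse handles zero coordinates of $p$ directly, with no limiting argument.

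Your argument needs no second certificate, only the trace-one support formula and antiblocking. It is essentially the proof of Lemma~\ref{lem:cone-normalization} specialized to $\cK_0(G)$. There only nonnegative scalings are allowed, so the paper first gets copositivity of $I-R$ and then upgrades it using $\sqrt p>0$. Here signed scalings are allowed, so you get $R\preceq I$ in one step.
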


\begin{proof}
We apply Theorem~\ref{thm:entropy} and \eqref{eq:antiblocking} to obtain $x\in\ThetaBody(G)$ and $y\in\ThetaBody(\overline G)$ with $x_i y_i=p_i$. Choose certificates $A$ and $B$ in~\eqref{eq:theta-body}, so that
\[
 A\succeq xx^T,\quad \diag A=x,\qquad
 B\succeq yy^T,\quad \diag B=y,
\]
and $A$ and $B$ vanish on the edges of $G$ and $\overline G$, respectively. Let $\dagger$ denote the Moore--Penrose pseudoinverse, and $\dagger/2$ denote the matrix square root of the pseudoinverse. Set
\[
  R = \Diag(p)^{\dagger/2}\Diag(y) A \Diag(y)\Diag(p)^{\dagger/2}.
\]
Then, by conjugating $\Diag(p)^{\dagger/2}\Diag(y)$, we obtain
\[
  A\succeq xx^T \implies R \succeq \sqrt p\,\sqrt p^{\,T}.
\]
The Schur product theorem gives $A\circ(B-yy^T)\succeq0$, and their complementary zero patterns give $A\circ B=\Diag(p)$. Thus,
\begin{equation}\label{eq:schur-step}
 A\circ(B-yy^T)\succeq0 \implies \Diag(p) \succeq \Diag(y)A\Diag(y),
\end{equation}
and by conjugating $\Diag(p)^{\dagger/2}$ we get $I \succeq R$.

Looking at $R$, the edge zeros from $A$ are preserved, and its non-zero diagonal entries are given by $R_{ii}=y_i^2x_i/p_i=y_i$. Thus $\diag R \leq y$, and therefore $\diag R\in\ThetaBody(\overline G)$.

Finally, for $v=\sqrt p$ we have $\|v\|=1$ and $vv^T\preceq R\preceq I$, so $v^T(I-R)v=0$. As $I-R\succeq0$, this implies $(I-R)v=0$. The trace bound follows from $\diag R\in\ThetaBody(\overline G)$.
\end{proof}

\subsection{Applications and organization}

Lemma~\ref{lem:normalization} has two useful features. Its eigenvector can be chosen to match either vertex weights or a leading eigenvector of a graph matrix, while its diagonal supplies an antiblocking certificate for a trace bound. Section~\ref{sec:generalized} uses these features for generalized theta numbers: Cartesian products, saturation, complementary products, and convexification. Section~\ref{sec:spectral} applies them to the strong Hoffman bound and identifies its optimum over signed matrix weights. Section~\ref{sec:stronger} replaces the graph zero pattern by a diagonally invariant cone. This yields the nonnegative generalized-theta comparison of Kuryatnikova, Sotirov, and Vera \cite{KuryatnikovaEtAl2022} and an all-graphs extension of a theta-based graph-energy inequality.

\section{Generalized Lov\'asz theta}\label{sec:generalized}

We first compare three generalizations of the theta number aimed at studying the maximum $k$-colourable subgraph problem. We prove the inequality in Sinjorgo and Sotirov's Cartesian-product conjecture; see \cite{SinjorgoSotirov2022}. We then answer the saturation question of Carli Silva, Coutinho, Oliveira, and Tun\c{c}el~\cite{CarliSilvaEtAl2026}, derive a complementary-product bound asked for by Sinjorgo and Sotirov~\cite{SinjorgoSotirov2022}, and identify the greatest convex minorant of the vertex-weighted generalized theta function.

\subsection{The generalized theta number and the \texorpdfstring{$k$}{k}-fold theta body}
\label{sec:objects}

Let $\alpha_k(G)$ be the maximum order of a $k$-colorable induced subgraph of $G$. Narasimhan and Manber~\cite{NarasimhanManber1990} introduced a generalization of the Lovász theta parameter that bounds $\alpha_k(G)$ from above. Its semidefinite formulation, due to Alizadeh~\cite[Section~5.3]{Alizadeh1995}, extends to real $0<k\leq n$ as
\begin{equation}\label{eq:nm}
 \vartheta_k(G)=\max\bigl\{\ip{J}{X}:
  0\preceq X\preceq I,\ \tr X=k,\ X_{ij}=0\ (ij\in E)\bigr\}.
\end{equation}
At $k=1$ this is $\vartheta(G)$. For integral $k$, a $k$-colorable induced subgraph with nonempty color classes $S_1,\ldots,S_k$ gives the feasible matrix $\sum_{a=1}^k |S_a|^{-1}\one_{S_a}\one_{S_a}^T$, and hence $\alpha_k(G)\leq\vartheta_k(G)$. The trace constraint can be presented with the inequality $\tr X \leq k$ with no change in the optimum value; see \cite[Lemma 1]{CarliSilvaCoutinhoGrandsire2021}.

Following~\cite{CarliSilvaEtAl2026}, define
\begin{equation}\label{eq:k-body}
 \ThetaBody_k(G)=k\ThetaBody(G)\cap[0,1]^V,\qquad \text{and} \quad 
 \theta_k(G)=\max_{z\in\ThetaBody_k(G)}\one^Tz.
\end{equation}
Thus $z\in\ThetaBody_k(G)$ exactly when $0\leq z\leq\one$ and there is a matrix $Z$ satisfying
\begin{equation}\label{eq:k-certificate}
 \diag Z=z,\qquad Z_{ij}=0\ (ij\in E),\qquad
 Z\succeq\frac1kzz^T.
\end{equation}
For integral $k$, $\theta_k(G)$ also bounds $\alpha_k(G)$: the incidence vector of a $k$-colorable vertex set, divided by $k$, lies in $\ThetaBody(G)$. The matrix-lifting relaxation with additional entrywise nonnegativity was studied by Kuryatnikova, Sotirov, and Vera~\cite{KuryatnikovaEtAl2022}. We treat that strengthening in Section~\ref{sec:ksv}.

A third bound is $\vartheta(G\cart K_k)$. The Cartesian product has vertex set $V\times\{1,\ldots,k\}$, with $(i,a)$ adjacent to $(j,b)$ when $a=b$ and $ij\in E$, or when $i=j$ and $a\ne b$. Since $\alpha(G\cart K_k)=\alpha_k(G)$, its theta number bounds the same problem. Sinjorgo and Sotirov~\cite[Conjecture~1]{SinjorgoSotirov2022} conjectured that
\[
 \vartheta(G\cart K_k)\leq\vartheta_k(G),
\]
with equality whenever $\vartheta_k(G)=k\vartheta(G)$. Carli Silva et al.~\cite{CarliSilvaEtAl2026} showed this conjecture to hold with equality in certain graphs admitting a strong degree of regularity: graphs whose adjacency matrix belongs to homogeneous partially coherent algebras. Here, we show the inequality for all graphs, and that the conjectured sufficient condition for equality does not hold in general.

\subsection{Cartesian products and a counterexample}\label{sec:comparison}

For real $k\geq1$, set
\begin{equation}\label{eq:q-sdp}
 q_k(G)=\max\left\{\ip{J}{Q}:
 \begin{array}{l}
 0\preceq Q\preceq kP,\quad \diag Q=\diag P,\quad \tr P=1,\\
 P,Q\in\Smat^V,\quad P_{ij}=0\ (ij\in E)
 \end{array}\right\}.
\end{equation}
For integral $k$, this equals $\vartheta(G\cart K_k)$; see the averaging argument introduced in~\cite[Lemma~2.8]{GvozdenovicLaurent2008} which leads directly to the formulation in~\cite[Equation~(9)]{CarliSilvaEtAl2026}. The constraints of this formulation, which extends naturally to real $k$, are given by
\[
 X-Y\succeq0,\quad X+(k-1)Y\succeq0,\quad
 X_{ij}=0\ (ij\in E),\quad\diag Y=0,\quad k\tr X=1,
\]
and the objective function to be maximized is $k\ip{J}{X}+k(k-1)\ip{J}{Y}$. The substitutions $P=kX$ and $Q=k(X+(k-1)Y)$ give~\eqref{eq:q-sdp}. At $k=1$, the constraints force $Q=P$.

The following theorem establishes the chain of inequalities relating $q_k(G)$, $\theta_k(G)$, and $\vartheta_k(G)$, and solves the inequality conjecture of Sinjorgo and Sotirov for all graphs. The inequality $q_k(G) \leq \theta_k(G)$ follows from elementary methods, and $\theta_k(G) \leq \vartheta_k(G)$ is a consequence of the lifted entropy splitting lemma.

\begin{theorem}\label{thm:chain}
For every graph $G$ on $n$ vertices and every real $1\leq k\leq n$,
\begin{equation}\label{eq:chain}
 q_k(G)\leq\theta_k(G)\leq\vartheta_k(G).
\end{equation}
Consequently, $\vartheta(G\cart K_k)\leq\vartheta_k(G)$ for every
integer $1\leq k\leq n$.
\end{theorem}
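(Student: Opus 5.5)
The first inequality can be obtained by building an explicit point of $\ThetaBody_k(G)$ from a feasible pair $(P,Q)$ of \eqref{eq:q-sdp}. The second follows by feeding Lemma~\ref{lem:normalization} with the normalized vector $p=z/\one^Tz$ and conjugating $R$ by $\Diag(\sqrt z)$. The Cartesian-product consequence is then immediate from $q_k(G)=\vartheta(G\cart K_k)$ for integral $k$.

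\emph{Step 1: $q_k(G)\le\theta_k(G)$.} Fix $(P,Q)$ feasible in \eqref{eq:q-sdp}. Let $w=Q\one$ and $s=\one^Tw=\ip{J}{Q}$, and assume $s>0$. Since $Q\succeq0$, we have $Q\succeq ww^T/s$, so $ww^T\preceq sQ\preceq skP$. If $P_{ii}=0$, then $Q_{ii}=0$, hence $w_i=0$; we discard such indices (setting $z_i=0$ there).

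Set $D=\Diag(w_i/P_{ii})$, $z_i=w_i^2/(sP_{ii})$ and $Z=DPD/s$. Then $\diag Z=z$, and $Z$ vanishes on $E$ because $P$ does. Moreover,
\[
Z\succeq Dww^TD/(s^2k)=zz^T/k,
\]
since $Dw=sz$. The Cauchy--Schwarz inequality $(Q\one)_i^2\le Q_{ii}\,\one^TQ\one=P_{ii}s$ gives $0\le z_i\le1$, so $z\in\ThetaBody_k(G)$ by \eqref{eq:k-certificate}. Finally, using $\tr P=1$,
\[
s=\sum_i w_i\le\Bigl(\sum_i w_i^2/P_{ii}\Bigr)^{1/2}(\tr P)^{1/2}=(s\,\one^Tz)^{1/2},
\]
so $\one^Tz\ge s$. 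Taking the supremum over feasible $(P,Q)$ gives the claim.

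\emph{Step 2: $\theta_k(G)\le\vartheta_k(G)$.} Take $z\in\ThetaBody_k(G)$ with $s=\one^Tz>0$, and apply Lemma~\ref{lem:normalization} to $G$ with $p=z/s$. This gives $R$ with $\sqrt p\sqrt p^{\,T}\preceq R\preceq I$, $R_{ij}=0$ on $E$, and $r:=\diag R\in\ThetaBody(\overline G)$. Put
\[
X=\Diag(\sqrt z)\,R\,\Diag(\sqrt z).
\]
We check the constraints of \eqref{eq:nm} in its $\tr X\le k$ form.
\begin{itemize}
\item $X\succeq0$, and $X$ vanishes on $E$.
\item $X\preceq\Diag(z)\preceq I$, because $z\le\one$.
\item $\tr X=\sum_i z_ir_i=k\,(z/k)^Tr\le k$. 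Here $z/k\in\ThetaBody(G)$ and $r\in\ThetaBody(\overline G)=\abl(\ThetaBody(G))$ by \eqref{eq:antiblocking}.
\end{itemize}
For the objective,
\[
\ip{J}{X}\ge(\sqrt z^{\,T}\sqrt p)^2=\Bigl(\sum_i z_i/\sqrt s\Bigr)^2=s .
\]
Hence $\vartheta_k(G)\ge\one^Tz$, and maximizing over $z$ finishes Step 2.

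The only genuinely delicate point is Step 2: choosing the right combination of the lemma's matrix with $z$ so that the trace and objective bounds both land at exactly $k$ and $s$. Conjugating by $\Diag(\sqrt z)$ with $p\propto z$ achieves this, and it needs no certificate $Z$ at all. I would double-check the degenerate cases ($s=0$, indices with $P_{ii}=0$ or $z_i=0$) and the equivalence of $\tr X=k$ with $\tr X\le k$ cited from \cite{CarliSilvaCoutinhoGrandsire2021}.
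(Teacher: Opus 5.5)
Your proposal is correct and matches the paper's proof: in Step 1 your $Z=DPD/s$ with $D=\Diag(w_i/P_{ii})$ is exactly the paper's $Z=DPD$ with $D=\Diag(a)\Diag(d)^\dagger$ and $a=Q\one/\sqrt q$ (the two scalings differ only by the factor $\sqrt s$), followed by the same Cauchy--Schwarz bounds. In Step 2 you use the same conjugation $\Diag(\sqrt z)\,R\,\Diag(\sqrt z)$ with $p=z/s$, the same antiblocking trace bound, and the same trace-inequality form of \eqref{eq:nm}.
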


\begin{proof}
Let $(P,Q)$ be feasible in~\eqref{eq:q-sdp}, and suppose
$q=\ip{J}{Q}>0$. We will construct a feasible pair $(z,Z)$ for \eqref{eq:k-certificate} with objective value in \eqref{eq:k-body} at least $q$.

Let $e = \one/\sqrt q$. Put $d=\diag P=\diag Q$ and $a=Q e$. Define $D = \Diag(a) \Diag(d)^\dagger$, and
\[
 Z = D P D, \qquad \text{and} \quad z = \diag Z.
\]
The Cauchy--Schwarz inequality for the positive semidefinite form $Q$
gives, for any vector $x$, 
\begin{equation} \label{eq:CS}
 x^T aa^T x = (x^T Q e)^2 \leq x^T Q x \cdot e^T Q e = x^T Q x,
\end{equation}
hence $aa^T \preceq Q \preceq kP$, and thus
\[
 Z = DPD \succeq \frac{1}{k}(Da)(Da)^T = \frac{1}{k} zz^T.
\]
Also by \eqref{eq:CS} with $x$ the $i$th basis vector, we get $a_i^2 \leq Q_{ii} = d_i$, hence $0 \leq z_i = a_i^2/d_i \leq 1$ for all $i$ with $d_i>0$. Therefore $z\in\ThetaBody_k(G)$.

Since $\one^Td=1$, Cauchy--Schwarz now gives
\[
  q = (\one^Ta)^2 = (d^T \Diag(d)^\dagger a)^2 \leq (d^T \Diag(d)^\dagger d) (a^T \Diag(d)^\dagger a) = \one^T z \leq \theta_k(G).
\]
The case $q=0$ is immediate.

For the second inequality, take $z\in\ThetaBody_k(G)$ with $s=\one^Tz>0$.
Apply Lemma~\ref{lem:normalization} to $p=z/s$ and let
$M=\Diag(\sqrt z)R\Diag(\sqrt z)$.
Then $0\preceq M\preceq\Diag(z)\preceq I$ and $M$ vanishes on the edges.Antiblocking duality gives
\[
 \tr M=z^T\diag R\leq k,
 \qquad
 \ip{J}{M}=s\sqrt p^{\,T}R\sqrt p=s.
\]
The trace-inequality version of~\eqref{eq:nm} now gives
$\vartheta_k(G)\geq\one^Tz$. The case $z=0$ is trivial.
\end{proof}

The equality clause in Sinjorgo and Sotirov's conjecture fails for
a disjoint union of two complete graphs, as seen below.

\begin{proposition}\label{prop:counterexample}
Let $G=K_6\mathbin{\dot\cup}K_2$ and $k=3$. Then
\[
 \vartheta_3(G)=3\vartheta(G)=6,
 \qquad \vartheta(G\cart K_3)=5.
\]
\end{proposition}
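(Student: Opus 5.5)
The plan is to compute both sides directly, using that every graph involved is perfect or decomposes into perfect pieces, and to exhibit an explicit optimal matrix for $\vartheta_3(G)$.

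\emph{Step 1: $\vartheta(G)=2$ and $\vartheta_3(G)\le 6$.} Since $G=K_6\mathbin{\dot\cup}K_2$ is perfect with $\alpha(G)=2=\chi(\overline G)$, we have $\vartheta(G)=2$. For the upper bound, take any $X$ feasible in \eqref{eq:nm} with $k=3$. Then $X/3$ is feasible in the matrix formulation of \eqref{eq:theta-duality} with $w=\one$. Hence $\ip{J}{X}\le 3\vartheta(G)=6$. The same argument gives $\vartheta_k\le k\vartheta$ in general.

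\emph{Step 2: a matching lower bound.} Label the vertices of $K_6$ as $1,\dots,6$ and those of $K_2$ as $a,b$. Set
\[
X=\tfrac14\Bigl(\sum_{i=1}^{3}(e_i+e_a)(e_i+e_a)^T+\sum_{i=4}^{6}(e_i+e_b)(e_i+e_b)^T\Bigr).
\]
Each summand has trace $2$ and $\ip{J}{\cdot}=4$, so $\tr X=3$ and $\ip{J}{X}=6$. The only nonzero off-diagonal entries join a vertex of $K_6$ to a vertex of $K_2$, so $X$ vanishes on the edges of $G$. Clearly $X\succeq0$.

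To check $X\preceq I$, note that $X$ splits into two identical blocks, one on $\{1,2,3,a\}$ and one on $\{4,5,6,b\}$. Each block equals $\frac14\begin{pmatrix} I_3&\one\\ \one^T&3\end{pmatrix}$. On the orthogonal complement of $\operatorname{span}\{(\one,0),(0,1)\}$ this block acts as $\frac14$. On that span it acts as $\frac14\begin{pmatrix}1&\sqrt3\\ \sqrt3&3\end{pmatrix}$, whose eigenvalues are $0$ and $1$. Hence $X\preceq I$, and therefore $\vartheta_3(G)=6=3\vartheta(G)$.

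\emph{Step 3: $\vartheta(G\cart K_3)=5$.} The Cartesian product distributes over disjoint union, so
\[
G\cart K_3=(K_6\cart K_3)\mathbin{\dot\cup}(K_2\cart K_3).
\]
The theta number is additive over disjoint unions; this follows directly from \eqref{eq:theta-duality}, or from $\ThetaBody$ of a disjoint union being the product of the theta bodies. Each $K_m\cart K_n$ is the line graph of $K_{m,n}$, hence perfect. So its theta number equals its independence number, which is $\min(m,n)$, since an independent set is a partial matching in $K_{m,n}$. This gives $3+2=5$.

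The only real obstacle is Step 2: one must find a feasible $X$ attaining $6$ and verify $X\preceq I$. This is settled by the explicit star-shaped rank-one decomposition above and the $2\times2$ eigenvalue computation. The remaining steps are standard facts about perfect graphs.
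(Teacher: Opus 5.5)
Your proof is correct and follows the paper's route: you get $\vartheta_3(G)\leq 3\vartheta(G)=6$ by dividing a feasible matrix by $3$, exhibit an explicit feasible matrix attaining $6$, and use additivity of $\vartheta$ over disjoint unions together with $\vartheta(K_a\cart K_b)=\min\{a,b\}$. The differences are cosmetic. The paper's certificate $\begin{pmatrix}\tfrac14 I_6&\tfrac18 J_{6,2}\\ \tfrac18 J_{2,6}&\tfrac34 I_2\end{pmatrix}$ is exactly the average of your block-diagonal star matrix and its image under swapping the two vertices of $K_2$. For $\vartheta(K_a\cart K_b)\leq\min\{a,b\}$, the paper uses the sandwich theorem with a cover by rows or columns, not perfectness of line graphs of bipartite graphs.
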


\begin{proof}
The theta number is additive on disjoint unions, so $\vartheta(G)=2$. Consider
\[
 X=\begin{pmatrix}
     \tfrac14 I_6&\tfrac18 J_{6,2}\\
     \tfrac18 J_{2,6}&\tfrac34 I_2
    \end{pmatrix}.
\]
The subspaces orthogonal to the all-ones vectors in the two blocks give eigenvalues $1/4$ with multiplicity five and $3/4$ with multiplicity one. On the remaining two-dimensional subspace, the eigenvalues are $0$ and $1$. Thus $0\preceq X\preceq I$, $\tr X=3$, and $X$ has the required edge zeros. Its objective is $\ip{J}{X}=6$. Conversely, $\vartheta_3(G)\leq3\vartheta(G)=6$, by dividing a feasible matrix by three in the trace-one formulation of theta.

The graph $K_a\cart K_b$ has independence number and theta number $\min\{a,b\}$: a matching of pairs gives the lower bound, and a cover by $\min\{a,b\}$ cliques gives the upper bound. Additivity therefore gives
\[
 \vartheta(G\cart K_3)
 =\vartheta(K_6\cart K_3)+\vartheta(K_2\cart K_3)=3+2=5.
 \qedhere
\]
\end{proof}

\subsection{Saturation parameters}\label{sec:mu}

The generalized theta number yields a lower bound on the chromatic number through its failure to reach $n$. Mohar and Poljak~\cite[Corollary~4.16]{MoharPoljak1993} observed that
\[
 \vartheta_k(G)<n\quad\Longrightarrow\quad\chi(G)>k
 \qquad(k\text{ integral}).
\]
Allowing real values, Carli Silva et al.~\cite[Section~2.2]{CarliSilvaEtAl2026} define the saturation parameter
\begin{equation}\label{eq:mu-def}
 \mu(G)=\min\{k\in[1,n]:\vartheta_k(G)=n\}.
\end{equation}
It satisfies $\mu(G)\leq\chi(G)$, and $\lceil\mu(G)\rceil$ is the integer saturation parameter of Sinjorgo and Sotirov~\cite[Equation~(39)]{SinjorgoSotirov2022}. The equivalent semidefinite formulation is
\begin{equation}\label{eq:mu-sdp}
 \mu(G)=\min\{\tr R:0\preceq R\preceq I,
                 \ R_{ij}=0\ (ij\in E),\ R\one=\one\}.
\end{equation}
Indeed, for a contraction $R$, the equality $\ip{J}{R}=n$ holds if and only if $(I-R)\one=0$.

We answer~\cite[Question~(iii)]{CarliSilvaEtAl2026}.

\begin{theorem}\label{thm:mu}
For every graph $G$,
\[
 \mu(G)\leq\vartheta(\overline G).
\]
\end{theorem}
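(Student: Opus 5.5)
The plan is to apply Lemma~\ref{lem:normalization} with the uniform distribution $p=\one/n$ and then read off a feasible point of the semidefinite formulation~\eqref{eq:mu-sdp}. With this choice $\sqrt p=\one/\sqrt n$. The lemma then supplies $R\in\Smat^V$ with
\[
 \tfrac1n J\preceq R\preceq I,\qquad R_{ij}=0\ (ij\in E),\qquad \diag R\in\ThetaBody(\overline G).
\]

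We next verify feasibility in~\eqref{eq:mu-sdp}. Since $\frac1nJ\succeq0$, we get $0\preceq R\preceq I$. The edge zeros are given directly by the lemma. The ``in particular'' clause gives $R\sqrt p=\sqrt p$, and multiplying by $\sqrt n$ yields $R\one=\one$. Hence $R$ is feasible, and
\[
 \mu(G)\leq\tr R=\one^T\diag R.
\]

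It remains to bound the trace. Because $\diag R\in\ThetaBody(\overline G)$, the identity~\eqref{eq:theta-duality} applied to $\overline G$ with $w=\one$ gives $\one^T\diag R\leq\vartheta(\overline G)$. This is exactly the trace bound already recorded in the lemma, and it completes the argument.

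Nothing here looks like a genuine obstacle; the work is entirely contained in Lemma~\ref{lem:normalization}. The only point to check is that~\eqref{eq:mu-sdp} is an honest reformulation of~\eqref{eq:mu-def}. It requires $R\preceq I$ and $R\one=\one$ rather than a trace equality. If one prefers the original definition~\eqref{eq:mu-def}, I would scale as follows. Set $k=\tr R$, which lies in $[1,n]$ because $R\one=\one$ and $0\preceq R\preceq I$. Then $X=R$ is feasible in~\eqref{eq:nm} with $\ip{J}{X}=n$, so $\vartheta_k(G)=n$. Since $\vartheta_k$ is nondecreasing in $k$ under the trace-inequality version, the minimum in~\eqref{eq:mu-def} is at most $k\leq\vartheta(\overline G)$.
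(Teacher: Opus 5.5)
Your proof is correct and is the paper's argument: apply Lemma~\ref{lem:normalization} with $p=\one/n$, note that $R$ is feasible in~\eqref{eq:mu-sdp}, and use $\tr R\leq\vartheta(\overline G)$. Your extra check against~\eqref{eq:mu-def} is also sound; there $\tr R\geq\tr(J/n)=1$, and $R$ itself has $\ip{J}{R}=n$ at trace exactly $\tr R$, so the monotonicity in $k$ is not needed.
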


\begin{proof}
Apply Lemma~\ref{lem:normalization} with $p=\one/n$. The resulting matrix is feasible in~\eqref{eq:mu-sdp} and satisfies $\tr R\leq\vartheta(\overline G)$.
\end{proof}

We have found that Silvestri~\cite{Silvestri2017} had already shown $\mu(G) \leq \vartheta^+(\overline G)$, where $\vartheta^+$ is Szegedy's variant~\cite{Szegedy1994}. Recalling that $\vartheta \leq \vartheta^+$ and this inequality can be strict, our result strengthens Silvestri's bound in general. Their proof uses diagonal scaling~\cite{BrualdiParterSchneider1966} to make $R=DXD$ stochastic, but for that $X$ must be entrywise nonnegative and hence applies only to $\vartheta^+$. Lemma~\ref{lem:normalization} removes this restriction by providing the contraction directly, without requiring entrywise nonnegativity.

The prescribed eigenvector also gives an exact worst-case form of the
saturation bound. Set
\begin{equation}\label{eq:weighted-saturation}
 \mu(G,p)=\min\{\tr R:\sqrt p \sqrt p^T\preceq R\preceq I,
                            \ R_{ij}=0\ (ij\in E)\}.
\end{equation}
For positive semidefinite contractions, $R \sqrt p = \sqrt p$ is equivalent to
$R\succeq \sqrt p \sqrt p^T$, so this is the least trace at which the objective
$\sqrt p^T R \sqrt p$ reaches one. In particular, $\mu(G,\one/n)=\mu(G)$.

\begin{proposition}\label{prop:worst-saturation}
For every graph $G$,
\[
 \max_{p\geq0,\ \one^Tp=1}\mu(G,p)=\vartheta(\overline G).
\]
One maximizing $p$ is the diagonal of an optimal trace-one theta matrix
for $\overline G$.
\end{proposition}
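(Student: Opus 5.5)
The upper bound $\mu(G,p)\leq\vartheta(\overline G)$ for every probability vector $p$ is immediate from Lemma~\ref{lem:normalization}. The matrix $R$ it produces satisfies $\sqrt p\sqrt p^{\,T}\preceq R\preceq I$ and vanishes on $E$, so it is feasible in~\eqref{eq:weighted-saturation}, and $\tr R\leq\vartheta(\overline G)$. The work is therefore the lower bound: I will exhibit one $p$ with $\mu(G,p)\geq\vartheta(\overline G)$, and I will take $p=\diag Y$ for an optimal trace-one theta matrix $Y$ of $\overline G$.

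\textbf{Choosing the optimal matrix.} By~\eqref{eq:theta-duality} applied to $\overline G$ with $w=\one$, pick $Y\succeq0$ with $\tr Y=1$, $Y_{ij}=0$ for $ij\in E(\overline G)$, and $\ip{J}{Y}=\vartheta(\overline G)$. Set $p=\diag Y$, so $p\geq0$ and $\one^Tp=1$. Take any $R$ feasible for $\mu(G,p)$; I need $\tr R\geq\vartheta(\overline G)$.

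\textbf{The Schur product argument.} Let $u=\sqrt p$. Feasibility gives $R\succeq uu^T$, so $R-uu^T\succeq0$. The Schur product theorem then gives $(R-uu^T)\circ Y\succeq 0$, and I will evaluate this against $J$. Since $R$ vanishes on $E$ and $Y$ vanishes on the edges of $\overline G$, the product $R\circ Y$ is diagonal with entries $R_{ii}p_i$. Hence
\[
 \one^T\bigl((R-uu^T)\circ Y\bigr)\one=\sum_i R_{ii}p_i-\sum_{i,j}\sqrt{p_i p_j}\,Y_{ij}\geq 0 .
\]
This inequality has the wrong normalization: it bounds a $p$-weighted sum of the $R_{ii}$, not $\tr R$.

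\textbf{Fixing the normalization (the main obstacle).} I will instead test the positive semidefinite matrix $(R-uu^T)\circ Y$ against a rescaled vector, namely $\Diag(p)^{\dagger/2}\one$, with entries $p_i^{-1/2}$ where $p_i>0$. Then:
\begin{itemize}
\item the diagonal part contributes $\sum_{p_i>0}R_{ii}\leq\tr R$, using $R_{ii}\geq0$ since $R\succeq0$;
\item the subtracted part becomes $\sum_{i,j}\sqrt{p_ip_j}\,Y_{ij}/\sqrt{p_ip_j}=\sum_{i,j}Y_{ij}=\vartheta(\overline G)$.
\end{itemize}
Indices with $p_i=0$ must be handled separately. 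For such $i$, $Y_{ii}=0$ and $Y\succeq0$ force the whole $i$-th row of $Y$ to vanish, so these indices contribute nothing to $\ip{J}{Y}$. This gives $\tr R\geq\vartheta(\overline G)$, which proves both the equality of the maximum and that $p=\diag Y$ attains it.
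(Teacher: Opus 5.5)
Your proof is correct and is essentially the paper's argument: testing $(R-uu^T)\circ Y\succeq0$ against $\Diag(p)^{\dagger/2}\one$ is the same as pairing $R-uu^T$ with $C=\Diag(p)^{\dagger/2}Y\Diag(p)^{\dagger/2}$, and the paper uses exactly this matrix $C$ on $\supp p$. The only difference is cosmetic: the paper extends $C$ by the identity off $\supp p$ to get $\tr R=\ip{C}{R}$ exactly, while you drop those diagonal terms using $R_{ii}\geq0$.
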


\begin{proof}
The upper bound is Lemma~\ref{lem:normalization}. For the reverse bound, choose $Y\succeq0$ with $\tr Y=1$, zero entries on $E(\overline G)$, and $\ip{J}{Y}=\vartheta(\overline G)$. Set $p=\diag Y$. On $S=\supp p$, let $C_{ij}=Y_{ij}/\sqrt{p_i p_j}$; extend $C$ by an identity matrix on $V\setminus S$ and zero cross entries. Then $C\succeq0$, $\diag C=\one$, and its off-diagonal support is contained in $E(G)$. Every $R$ feasible in~\eqref{eq:weighted-saturation} therefore satisfies
\[
 \tr R=\ip{C}{R}\geq \sqrt p^T C \sqrt p
       =\ip{J}{Y}=\vartheta(\overline G).
 \qedhere
\]
\end{proof}

\subsection{The complementary-product question}\label{sec:product}

The classical inequality
$\vartheta(G)\vartheta(\overline G)\geq n$ motivates the question of
Sinjorgo and Sotirov~\cite[Section~9]{SinjorgoSotirov2022} about a
corresponding lower bound for generalized theta numbers. The saturation
certificate gives a sharp bound for every real $k\in[1,n]$.

\begin{proposition}\label{thm:product}
For every graph $G$ on $n$ vertices and every real $1\leq k\leq n$,
\begin{equation}\label{eq:complement-product}
 \vartheta_k(G)\vartheta_k(\overline G)\geq nk.
\end{equation}
The bound is attained by $G=K_n$ and by $G=\overline{K_n}$.
\end{proposition}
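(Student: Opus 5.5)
The plan is to combine the saturation certificate of Lemma~\ref{lem:normalization}, taken with the uniform distribution $p=\one/n$, with two trivial lower bounds on $\vartheta_k$, and then split into cases according to how $\vartheta(G)$ and $\vartheta(\overline G)$ compare with $k$.

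\emph{Scaled certificate.} Applying Lemma~\ref{lem:normalization} to $G$ with $p=\one/n$ gives a matrix $R$ with
\[
 0\preceq R\preceq I,\qquad R\one=\one,\qquad R_{ij}=0\ (ij\in E),\qquad \tr R\leq\vartheta(\overline G).
\]
For $t=\min\{1,k/\tr R\}$, the matrix $tR$ is feasible in the trace-inequality version of~\eqref{eq:nm}. Indeed $0\preceq tR\preceq I$ because $t\leq1$, and $\tr(tR)\leq k$. Its objective is $\ip{J}{tR}=tn$. Hence
\[
 \vartheta_k(G)\geq n\min\{1,k/\vartheta(\overline G)\}.
\]
The same bound holds with $G$ and $\overline G$ exchanged.

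\emph{Trivial bounds.} For every graph $H$ we have $\vartheta_k(H)\geq k$, witnessed by $X=\frac kn I$, which is feasible since $k\leq n$. Also $\vartheta_k(H)\geq\vartheta(H)$: an optimal trace-one theta matrix $Y$ satisfies $0\preceq Y\preceq I$ and $\tr Y=1\leq k$, so it is feasible in the trace-inequality version.

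\emph{Case split.}
\begin{itemize}
 \item If $\vartheta(\overline G)\leq k$, the scaled certificate gives $\vartheta_k(G)=n$ (the upper bound $\vartheta_k(G)\leq n$ holds because $X\preceq I$ forces $\ip{J}{X}\leq n$). Combined with $\vartheta_k(\overline G)\geq k$, the product is at least $nk$.
 \item If $\vartheta(G)\leq k$, the same argument applies with $G$ and $\overline G$ exchanged.
 \item If $\vartheta(G)>k$ and $\vartheta(\overline G)>k$, then $\vartheta_k(G)\geq nk/\vartheta(\overline G)$ and $\vartheta_k(\overline G)\geq\vartheta(\overline G)$. Their product is at least $nk$.
\end{itemize}
The only point needing care is the scaling choice $t=\min\{1,k/\tr R\}$. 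It keeps both $tR\preceq I$ and the trace bound, and this is exactly where the contraction property $R\preceq I$ from Lemma~\ref{lem:normalization} is essential.

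\emph{Sharpness.} For $K_n$, every feasible $X$ in~\eqref{eq:nm} is diagonal, so $\ip{J}{X}=\tr X=k$ and $\vartheta_k(K_n)=k$. For $\overline{K_n}$, take
\[
 X=\tfrac1nJ+\tfrac{k-1}{n-1}\bigl(I-\tfrac1nJ\bigr).
\]
Its eigenvalues are $1$ and $\frac{k-1}{n-1}\leq1$, its trace is $k$, and $\ip{J}{X}=n$. Together with the bound $\vartheta_k\leq n$, this gives $\vartheta_k(\overline{K_n})=n$. So both graphs give the product $nk$. When $n=1$, both graphs reduce to the single-vertex case, where the product is trivially $1=nk$.
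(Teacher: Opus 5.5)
Your proof is correct and follows the paper's argument. Both combine the scaled certificate from Lemma~\ref{lem:normalization} with $p=\one/n$ and the bound $\vartheta_k(\overline G)\geq\max\{\vartheta(\overline G),k\}$; the paper multiplies these two bounds directly instead of splitting into cases (your second case is redundant, since comparing $\vartheta(\overline G)$ with $k$ alone suffices), and your explicit sharpness check, including the matrix for $\overline{K_n}$ and the case $n=1$, fills in details the paper leaves implicit.
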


\begin{proof}
Lemma~\ref{lem:normalization}, with $p=\one/n$, gives a contraction $R$ with the edge zeros of $G$, $\tr R\leq \vartheta(\overline G)$, and $\ip{J}{R}=n$. Scaling $R$ by $\min\{1,k/\vartheta(\overline G)\}$ shows that
\begin{equation}\label{eq:product-factor}
 \vartheta_k(G)\geq n\min\{1,k/\vartheta(\overline G)\}.
\end{equation}
On the other hand, \cite[Lemma 1]{CarliSilvaCoutinhoGrandsire2021} and the feasible matrix $(k/n)I$ give $\vartheta_k(\overline G)\geq\max\{\vartheta(\overline G),k\}$. Multiplying proves~\eqref{eq:complement-product}. For $K_n$, the two factors are $k$ and $n$.
\end{proof}

\subsection{Vertex weights and convexification}\label{sec:weights}

For $w\in\R_+^V$ and real $0<k\leq n$, define
\begin{align}
 \vartheta_k(G;w)
 &=\max\{\sqrt w^{\,T}X\sqrt w:0\preceq X\preceq I,
             \ \tr X=k,\ X_{ij}=0\ (ij\in E)\},\label{eq:weighted-nm}\\
 \theta_k(G;w)&=\max\{w^Tz:z\in\ThetaBody_k(G)\}.\label{eq:weighted-body}
\end{align}
At $k=1$ both equal $\vartheta(G;w)$. The function $\vartheta_k(G;\cdot)$ is continuous and positively homogeneous, but need not be convex~\cite[Section~5.2]{CarliSilvaEtAl2026}.

\begin{theorem}\label{thm:weighted-domination}
For every graph $G$, every real $0<k\leq n$, and every $w\geq0$,
\[
 \theta_k(G;w)\leq\vartheta_k(G;w).
\]
\end{theorem}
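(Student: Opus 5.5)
The plan is to mimic the second half of the proof of Theorem~\ref{thm:chain}, now with a weighted probability vector. Fix $z\in\ThetaBody_k(G)$ with $w^Tz>0$; the case $w^Tz=0$ is trivial, since $\vartheta_k(G;w)\geq0$. The aim is to produce a matrix $M$ feasible for the trace-inequality version of~\eqref{eq:weighted-nm} with $\sqrt w^{\,T}M\sqrt w\geq w^Tz$. I first note that the trace constraint in~\eqref{eq:weighted-nm} may be relaxed to $\tr X\leq k$ without changing the value. As in \cite[Lemma 1]{CarliSilvaCoutinhoGrandsire2021}, a feasible $X$ with $\tr X<k$ can be enlarged: add a diagonal nonnegative matrix $\Diag(t)$ with $t\leq\one-\diag X$ coordinatewise, keeping $X\preceq I$ where possible. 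This only increases $\sqrt w^{\,T}X\sqrt w$ since $w\geq0$. I would check that the weighted version of that lemma goes through; if not, one can instead argue monotonicity in $k$ directly.

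The key choice is $p_i=w_iz_i/s$ with $s=w^Tz$. I apply Lemma~\ref{lem:normalization} to this $p$, obtaining $R$ with $\sqrt p\sqrt p^{\,T}\preceq R\preceq I$, edge zeros, and $\diag R\in\ThetaBody(\overline G)$. Then I set $M=\Diag(\sqrt z)R\Diag(\sqrt z)$ and verify the required properties in turn.
\begin{itemize}
\item Positivity and the upper bound follow from $0\preceq M\preceq\Diag(z)\preceq I$, using $z\leq\one$.
\item $M$ inherits the edge zeros of $R$.
\item For the trace, $\tr M=z^T\diag R$. Since $z/k\in\ThetaBody(G)$ and $\diag R\in\ThetaBody(\overline G)=\abl(\ThetaBody(G))$ by~\eqref{eq:antiblocking}, we get $\tr M\leq k$.
\item For the objective, $\Diag(\sqrt z)\sqrt w=\sqrt{wz}=\sqrt s\,\sqrt p$. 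Hence
\[
 \sqrt w^{\,T}M\sqrt w=s\,\sqrt p^{\,T}R\sqrt p=s,
\]
using $R\sqrt p=\sqrt p$ and $\|\sqrt p\|=1$.
\end{itemize}
Maximizing over $z$ then gives $\theta_k(G;w)\leq\vartheta_k(G;w)$.

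I expect the main obstacle to be the range $0<k<1$, where the relaxation $\tr X\leq k$ must be justified without the hypothesis $k\geq1$ used earlier. The difficulty is that padding the diagonal up to trace exactly $k$ must respect $X\preceq I$. For this I would try the following. Starting from $M$ with $\tr M\leq k\leq n$, consider the convex path $M_t=(1-t)M+tI$. The trace $\tr M_t$ rises continuously from $\tr M$ to $n$, so some $t$ gives trace exactly $k$. Along this path $M_t$ stays between $0$ and $I$ and keeps the edge zeros. Moreover, $\sqrt w^{\,T}M_t\sqrt w=(1-t)s+t\,\one^Tw\geq s$, because $s=w^Tz\leq\one^Tw$ when $z\leq\one$. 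This resolves the obstacle uniformly for all real $0<k\leq n$.
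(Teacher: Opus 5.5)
Your proof is correct and is essentially the paper's argument. It uses the same choice $p_i=w_iz_i/s$, the same matrix $M=\Diag(\sqrt z)R\Diag(\sqrt z)$, and the same antiblocking bound $\tr M=z^T\diag R\leq k$. Your interpolation $M_t=(1-t)M+tI$ gives a clean self-contained justification, for all $0<k\leq n$, of the trace-inequality relaxation that the paper uses without separate proof; this makes the diagonal-padding sketch in your first paragraph unnecessary (and as stated it need not preserve $X\preceq I$).
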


\begin{proof}
Take $z\in\ThetaBody_k(G)$ with $s=w^Tz>0$ and apply Lemma~\ref{lem:normalization} to $p_i=w_i z_i/s$. As in Theorem~\ref{thm:chain}, the matrix
$M=\Diag(\sqrt z)R\Diag(\sqrt z)$ is a contraction with the required edge zeros and $\tr M=z^T\diag R\leq k$. Its objective is
\[
 \sqrt w^{\,T}M\sqrt w=s\sqrt p^{\,T}R\sqrt p=s.
\]
The case $s=0$ is immediate. Maximizing over $z$ proves the claim.
\end{proof}

For a nonnegative positively homogeneous function $f$ on $\R_+^V$, its greatest convex minorant is
\begin{equation}\label{eq:conv-decomposition}
 f^{\mathrm{conv}}(w)=\inf\left\{\sum_{r=1}^{m}f(w^{(r)}):
 m\geq1,\ w^{(r)}\geq0,\ \sum_{r=1}^{m}w^{(r)}=w\right\}.
\end{equation}
The right-hand side is subadditive, positively homogeneous, and at most $f$. Conversely, if $g$ is convex and $g\leq f$, every such decomposition gives $g(w)\leq m^{-1}\sum_r g(mw^{(r)})\leq\sum_r f(w^{(r)})$. For $f(w)=\vartheta_k(G;w)$, denote this minorant by $\vartheta_k^{\mathrm{conv}}(G;w)$.
It is also the gauge of the convex hull of $\{w\geq0:\vartheta_k(G;w)\leq1\}$. In particular note that this set is compact, since it is closed, and bounded by $\vartheta_k(G;w)\geq(k/n)\one^Tw$.

We record the support-function identity behind the convexification.
Its proof applies to any convex corner, a fact used again in
Section~\ref{sec:ksv}.

\begin{lemma}\label{lem:splitting}
Let $C\subseteq\R_+^V$ be a convex corner, and let
$h_C(a)=\max_{x\in C}a^Tx$. For $k>0$ and $w\geq0$,
\begin{equation}\label{eq:weight-splitting-general}
 h_{kC\cap[0,1]^V}(w)
 =\min_{0\leq a\leq w}\{k h_C(a)+\one^T(w-a)\}.
\end{equation}
\end{lemma}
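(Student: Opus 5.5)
This is a support-function identity for an intersection of two convex sets, so I will prove it by LP/convex duality. The easy direction is weak duality, and the reverse direction will come either from a minimax theorem or from the rule for support functions of intersections.

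For the inequality $\leq$, take any $z\in kC\cap[0,1]^V$ and any $0\leq a\leq w$, and split $w^Tz=a^Tz+(w-a)^Tz$. Since $z/k\in C$, the first term satisfies $a^Tz=k\,a^T(z/k)\leq k h_C(a)$. Since $0\leq z\leq\one$ and $w-a\geq0$, the second term satisfies $(w-a)^Tz\leq\one^T(w-a)$. Taking the maximum over $z$ gives $h_{kC\cap[0,1]^V}(w)\leq k h_C(a)+\one^T(w-a)$ for every admissible $a$. The minimum on the right is attained, because the objective is continuous ($h_C$ is finite and convex) and the box $\{0\leq a\leq w\}$ is compact.

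For the inequality $\geq$, I will use the standard formula for support functions of intersections of closed convex sets. Its hypotheses hold: $kC$ and $[0,1]^V$ are compact and convex, and their intersection has nonempty interior, since $C$ is full-dimensional and downward closed and so contains $\epsilon[0,1]^V$ for some $\epsilon>0$. The formula then gives
\[
 h_{kC\cap[0,1]^V}(w)=\min_{u+v=w}\{k h_C(u)+h_{[0,1]^V}(v)\},
\]
with the minimum attained. Note that $h_{[0,1]^V}(v)=\one^Tv_+$, where $v_+$ is the positive part of $v$.

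The remaining step is to reduce from arbitrary $u\in\R^V$ to $0\leq a\leq w$, and I expect this to be the main obstacle. The key fact is that $C$ is a corner in $\R_+^V$, so $h_C(u)=h_C(u_+)$: for $x\in C$, replacing $x_i$ by $0$ wherever $u_i<0$ stays in $C$ and does not decrease $u^Tx$. Given a minimizing pair $(u,w-u)$, I set $a=\min(u_+,w)$ coordinatewise, so that $0\leq a\leq w$. Because $h_C$ is monotone on $\R_+^V$, $h_C(a)\leq h_C(u_+)=h_C(u)$. It remains to check $\one^T(w-a)\leq\one^T(w-u)_+$ coordinatewise:
\begin{itemize}
\item if $u_i\geq w_i$, then $a_i=w_i$ and both sides are $0$;
\item if $0\leq u_i<w_i$, then both sides equal $w_i-u_i$;
\item if $u_i<0$, then $a_i=0$ and $w_i\leq w_i-u_i$.
\end{itemize}
So the pair with $a$ does at least as well, which proves $\geq$.

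As an alternative for $\geq$, one could apply Sion's minimax theorem to the bilinear function $L(z,a)=w^Tz+a^T(\dots)$ written via a Lagrangian for the constraint $z\leq\one$ with multiplier $w-a$. I would use the intersection formula above as the primary route.
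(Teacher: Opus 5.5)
Your proof is correct and follows essentially the same route as the paper: the support-function formula for intersections (Rockafellar's corollary, whose relative-interior hypothesis you verify correctly), followed by the reduction $h_C(u)=h_C(u_+)$ and a coordinatewise truncation into $0\leq a\leq w$. The differences are minor. The paper intersects $kC$ with $\{z\leq\one\}$, whose support function is $+\infty$ off $\R_+^V$ and so forces $s\geq 0$ automatically, whereas you use $[0,1]^V$ with support function $\one^Tv_+$. You also check the easy inequality directly by weak duality.
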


\begin{proof}
Let $Q=\{z\in\R^V:z\leq\one\}$. Since $kC$ contains an interior
point strictly below $\one$, the relative interiors of $kC$ and $Q$
intersect. The support-function formula for intersections
\cite[Corollary~16.4.1]{Rockafellar1970} therefore gives
\[
 h_{kC\cap[0,1]^V}(w)
 =\min_{a+s=w}\{h_{kC}(a)+h_Q(s)\}
 =\min_{s\geq0}\{k h_C(w-s)+\one^Ts\}.
\]
Here we used $kC\subseteq\R_+^V$ and the fact that
$h_Q(s)=\one^Ts$ for $s\geq0$, whereas $h_Q(s)=+\infty$
otherwise.

Because $C$ is closed under coordinatewise decreases,
$h_C(v)=h_C(v_+)$: coordinates with negative coefficients can
be set to zero without leaving $C$. Consequently, replacing
$s$ by $s'$, where $s'_i=\min\{s_i,w_i\}$, preserves the support
term, since
\[
 w-s'=(w-s)_+,
 \qquad h_C(w-s')=h_C(w-s),
\]
and cannot increase $\one^Ts$. We may therefore restrict the
minimum to $0\leq s\leq w$. Substituting $a=w-s$ gives
\[
 h_{kC\cap[0,1]^V}(w)
 =\min_{0\leq a\leq w}
   \{k h_C(a)+\one^T(w-a)\},
\]
as claimed.
\end{proof}

\begin{theorem}\label{thm:convexification}
For every graph $G$, real $0<k\leq n$, and $w\geq0$,
\begin{equation}\label{eq:convexification}
 \vartheta_k^{\mathrm{conv}}(G;w)=\theta_k(G;w)
 =\min_{0\leq a\leq w}
       \{k\vartheta(G;a)+\one^T(w-a)\}.
\end{equation}
\end{theorem}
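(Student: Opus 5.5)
The second equality is Lemma~\ref{lem:splitting} applied to $C=\ThetaBody(G)$. By definition $h_{\ThetaBody_k(G)}=\theta_k(G;\cdot)$, and by \eqref{eq:theta-duality} $h_{\ThetaBody(G)}(a)=\vartheta(G;a)$, so \eqref{eq:weight-splitting-general} reads exactly as the right-hand equality. The plan is then to prove the first equality by showing $\theta_k(G;\cdot)\leq\vartheta_k^{\mathrm{conv}}(G;\cdot)$ and the reverse inequality separately.

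\emph{Lower bound.} The function $\theta_k(G;\cdot)$ is a support function, hence convex, and Theorem~\ref{thm:weighted-domination} gives $\theta_k(G;\cdot)\leq\vartheta_k(G;\cdot)$. Since $\vartheta_k^{\mathrm{conv}}$ is the greatest convex minorant, as argued after \eqref{eq:conv-decomposition}, we get $\theta_k(G;w)\leq\vartheta_k^{\mathrm{conv}}(G;w)$.

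\emph{Upper bound.} Fix a minimizer $a$ with $0\leq a\leq w$ and decompose $w=a+\sum_{i\in V}(w_i-a_i)e_i$. By \eqref{eq:conv-decomposition},
\[
 \vartheta_k^{\mathrm{conv}}(G;w)\leq\vartheta_k(G;a)+\sum_i\vartheta_k\bigl(G;(w_i-a_i)e_i\bigr).
\]
It then suffices to check two facts.
\begin{enumerate}
\item[(i)] $\vartheta_k(G;ce_i)\leq c$. Indeed, $\sqrt{c}^{\,2}X_{ii}\leq c$ because $X\preceq I$.
\item[(ii)] $\vartheta_k(G;a)\leq k\vartheta(G;a)$. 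Indeed, if $X$ is feasible in \eqref{eq:weighted-nm}, then $X/k$ has trace one, is positive semidefinite and has the edge zeros, so it is feasible in \eqref{eq:theta-duality}, giving $\sqrt a^{\,T}X\sqrt a\leq k\vartheta(G;a)$. This is the same argument as in Proposition~\ref{prop:counterexample}.
\end{enumerate}
Summing gives $\vartheta_k^{\mathrm{conv}}(G;w)\leq k\vartheta(G;a)+\one^T(w-a)$, which equals $\theta_k(G;w)$ by the second equality.

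\emph{Main obstacle.} The main step is recognizing that the lower bound is exactly Theorem~\ref{thm:weighted-domination}, which rests on Lemma~\ref{lem:normalization}. Once that is in place, the upper bound is routine. The only care needed is that the minimum in Lemma~\ref{lem:splitting} is attained, which follows from compactness and continuity, and that the decomposition uses nonnegative parts $w^{(r)}\geq0$, which holds since $0\leq a\leq w$.
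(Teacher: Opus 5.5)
Your proof is correct and matches the paper's: convexity of the support function $\theta_k(G;\cdot)$ together with Theorem~\ref{thm:weighted-domination} gives one inequality, and subadditivity with $\vartheta_k(G;a)\leq k\vartheta(G;a)$, the $X\preceq I$ bound, and Lemma~\ref{lem:splitting} gives the other. The only difference is that you split $w-a$ into coordinate pieces. This is harmless but unnecessary, since $X\preceq I$ already gives $\vartheta_k(G;w-a)\leq\one^T(w-a)$ directly.
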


\begin{proof}
The support function $\theta_k(G;\cdot)$ is convex and, by Theorem~\ref{thm:weighted-domination}, is dominated by $\vartheta_k(G;\cdot)$. Thus $\theta_k(G;w)\leq\vartheta_k^{\mathrm{conv}}(G;w)$. For the reverse inequality, subadditivity and minorization give, for $0\leq a\leq w$,
\[
 \vartheta_k^{\mathrm{conv}}(G;w)
 \leq\vartheta_k(G;a)+\vartheta_k(G;w-a)
 \leq k\vartheta(G;a)+\one^T(w-a).
\]
The last two bounds follow from dividing a feasible matrix by $k$ and from $X\preceq I$, respectively. Minimize over $a$ and apply Lemma~\ref{lem:splitting} with $C=\ThetaBody(G)$.
\end{proof}

The theorem answers~\cite[Question~(i)]{CarliSilvaEtAl2026}. It also
identifies the threshold after convexification. Set
\[
 K_{\mathrm{conv}}(G)=\inf\{k\in[1,n]:
                  \vartheta_k^{\mathrm{conv}}(G;\one)=n\}.
\]

\begin{corollary}\label{cor:conv-saturation}
For every graph $G$, $K_{\mathrm{conv}}(G)=\vartheta(\overline G)$.
\end{corollary}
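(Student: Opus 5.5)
By Theorem~\ref{thm:convexification}, $\vartheta_k^{\mathrm{conv}}(G;\one)=\theta_k(G;\one)=\theta_k(G)=\min_{0\leq a\leq\one}\{k\vartheta(G;a)+\one^T(\one-a)\}$. So the plan is to show that, as a function of $k\in[1,n]$, this quantity equals $n$ exactly when $k\geq\vartheta(\overline G)$. The threshold will then be $\max\{1,\vartheta(\overline G)\}$, which equals $\vartheta(\overline G)$ because $\vartheta(\overline G)\geq1$ always holds. The value $n$ is the largest possible, since $\ThetaBody_k(G)\subseteq[0,1]^V$.

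\emph{Sufficiency.} Suppose $k\geq\vartheta(\overline G)$. Use \eqref{eq:antiblocking}: $\ThetaBody(G)=\abl(\ThetaBody(\overline G))$ by the antiblocking duality of convex corners, and $\vartheta(\overline G)=\max_{y\in\ThetaBody(\overline G)}\one^Ty$. Together these give $\one/\vartheta(\overline G)\in\ThetaBody(G)$, and hence $\one\in k\ThetaBody(G)\cap[0,1]^V=\ThetaBody_k(G)$. Therefore $\theta_k(G)=n$. Alternatively, one can cite Theorem~\ref{thm:mu} together with $\theta_k\leq\vartheta_k$, but that only bounds $\vartheta_k$ from below and so goes in the wrong direction. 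The membership argument is the right one.

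\emph{Necessity.} Suppose $k<\vartheta(\overline G)$. I need some $0\leq a\leq\one$ with $k\vartheta(G;a)+n-\one^Ta<n$, that is, $k\vartheta(G;a)<\one^Ta$.
\begin{itemize}
\item[1.] Take $y\in\ThetaBody(\overline G)$ with $\one^Ty=\vartheta(\overline G)$, and set $a=y/\max_i y_i$. Since $y$ is nonzero, this gives $0\leq a\leq\one$.
\item[2.] By the antiblocking relation $\ThetaBody(G)=\abl(\ThetaBody(\overline G))$, the fact that $y\in\ThetaBody(\overline G)$ gives $\vartheta(G;y)=\max_{x\in\ThetaBody(G)}y^Tx\leq1$.
\item[3.] By homogeneity, $k\vartheta(G;a)\leq k/\max_i y_i$, while $\one^Ta=\vartheta(\overline G)/\max_i y_i$.
\end{itemize}
Since $k<\vartheta(\overline G)$, the inequality $k\vartheta(G;a)<\one^Ta$ is strict. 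Hence $\theta_k(G)<n$.

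Combining the two directions, $\{k\in[1,n]:\vartheta_k^{\mathrm{conv}}(G;\one)=n\}=[\vartheta(\overline G),n]$, and the infimum is $\vartheta(\overline G)$. The interval is nonempty because $\vartheta(\overline G)\leq n$.

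The only delicate point is the necessity step. There I have to make sure that the minimum in Lemma~\ref{lem:splitting} is genuinely below $n$, and not just some other expression. This is handled by the explicit choice of $a$ above; the rest is bookkeeping with Theorem~\ref{thm:convexification}.
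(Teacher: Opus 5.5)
Your proof is correct, and the sufficiency direction is the same as the paper's: antiblocking duality gives $\one/k\in\ThetaBody(G)$ once $k\geq\vartheta(\overline G)$. For necessity you take a different route.

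The paper argues on the primal side. $\ThetaBody_k(G)$ is compact and contained in $[0,1]^V$, so the value $n$ can only be attained at $z=\one$. That means $\one/k\in\ThetaBody(G)=\abl(\ThetaBody(\overline G))$, which is $\vartheta(\overline G)\leq k$, and the corollary becomes a single chain of equivalences.

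You instead exhibit the dual certificate $a=y/\max_i y_i$. This needs no attainment argument and gives an explicit gap, $\theta_k(G)\leq n-(\vartheta(\overline G)-k)/\max_i y_i$.

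Your certificate also applies directly to the decomposition $\one=a+(\one-a)$ in \eqref{eq:conv-decomposition}. The two easy bounds in the proof of Theorem~\ref{thm:convexification} then give $\vartheta_k^{\mathrm{conv}}(G;\one)\leq k\vartheta(G;a)+\one^T(\one-a)<n$. Combined with Theorem~\ref{thm:weighted-domination} for sufficiency, this proves the corollary without Lemma~\ref{lem:splitting}. The paper's first equivalence, by contrast, uses both halves of the identity $\vartheta_k^{\mathrm{conv}}=\theta_k$.

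The aside about Theorem~\ref{thm:mu} is unnecessary and can be dropped.
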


\begin{proof}
Compactness and Theorem~\ref{thm:convexification} give
\[
 \vartheta_k^{\mathrm{conv}}(G;\one)=n
 \iff \one\in\ThetaBody_k(G)
 \iff \one/k\in\ThetaBody(G)
 \iff \vartheta(\overline G)\leq k,
\]
where the last equivalence is antiblocking duality.
\end{proof}

\section{Graph eigenvalues and the chromatic number}\label{sec:spectral}

We next apply the lifting to a leading eigenvector. It produces a contraction whose trace controls the number of smallest eigenvalues needed to offset the largest one. This proves the strong Hoffman bound, see \cite{Hoffman1970}, with $\lceil\vartheta(\overline G)\rceil$ in place of the chromatic number and answers the orthogonal-rank question of Wocjan, Elphick, and Darbari~\cite{WocjanElphickDarbari2020} for this bound. Extending the argument to matrix weights and combining it with a result of Carli Silva, Coutinho, and Grandsire~\cite{CarliSilvaCoutinhoGrandsire2021} identifies the optimized Hoffman bound exactly.

\subsection{Hoffman bounds and related results}\label{sec:hoffman-survey}

For a nonzero real symmetric matrix $W$ of trace zero, write its
eigenvalues as $\lambda_1(W)\geq\cdots\geq\lambda_n(W)$ and define
\begin{equation}\label{eq:kappa}
 \kappa(W)=\min\left\{r\in\{1,\ldots,n-1\}:
       \lambda_1(W)+\sum_{i=1}^r\lambda_{n+1-i}(W)\leq0\right\}.
\end{equation}
The set is nonempty because the expression is zero for $r=n-1$. We put $\kappa(0)=0$. For the adjacency matrix $A$ of $G$, write $\kappa(G)=\kappa(A)$.
Hoffman's bounds~\cite{Hoffman1970} state, for a graph with an edge,
\begin{equation}\label{eq:hoffman}
 1+\frac{\lambda_1(A)}{-\lambda_n(A)}\leq\chi(G),
 \qquad 1+\kappa(G)\leq\chi(G).
\end{equation}
We refer to the second inequality as the strong Hoffman bound. It is at least as strong as the ceiling of the first.

Wocjan, Elphick, and Darbari~\cite[Theorem~7]{WocjanElphickDarbari2020} proved $1+\kappa(G)\leq\chi_q(G)$, where $\chi_q$ is the quantum chromatic number. In their concluding questions, they asked whether the orthogonal rank $\xi(G)$ could also replace $\chi_q(G)$. Here $\xi(G)$ is the smallest $d$ for which nonzero vectors in $\C^d$ can be assigned to vertices so that adjacent vertices receive orthogonal vectors. For further spectral bounds on orthogonal and projective rank, see Wocjan and Elphick~\cite{WocjanElphick2019}. Recent work of Abiad and Meeus~\cite{AbiadMeeus2025} develops related Hoffman bounds for distance colorings, quantum colorings, and vector colorings.

To discuss optimization, let
\[
 \mathcal W(G)=\{W\in\Smat^V:\diag W=0,
       \ W_{ij}=0\text{ if }i\ne j\text{ and }ij\notin E\}.
\]
Carli Silva, Coutinho, and Grandsire~\cite{CarliSilvaCoutinhoGrandsire2021} studied
\begin{equation}\label{eq:h-def}
 h(G)=\max_{W\in\mathcal W(G)}(1+\kappa(W))
\end{equation}
and established, in particular,
\begin{equation}\label{eq:ccg-lower}
 \lceil\vartheta(\overline G)\rceil\leq h(G).
\end{equation}
Their comparison uses the weighted Hoffman characterization of theta,
\begin{equation}\label{eq:theta-ratio}
 \vartheta(\overline G)
 =\max_{0\ne W\in\mathcal W(G)}
        \left(1+\frac{\lambda_1(W)}{-\lambda_n(W)}\right),
\end{equation}
valid when $G$ has an edge; see also~\cite{Lovasz1979,Knuth1994}.
We prove the reverse inequality in~\eqref{eq:ccg-lower}.

\subsection{A theta bound for the strong Hoffman parameter}
\label{sec:strong-hoffman}

It is useful to retain the fractional part of the last eigenvalue used in~\eqref{eq:kappa}. For a nonzero symmetric trace-zero matrix $W$, let $r=\kappa(W)$, put
$b_i=-\lambda_{n+1-i}(W)$ for $1\leq i\leq r$, and define 
\begin{equation}\label{eq:rho}
 \rho(W)=r-1+
       \frac{\lambda_1(W)-\sum_{i=1}^{r-1}b_i}{b_r}.
\end{equation}
The minimality of $r$ implies $b_r>0$ and
$r-1<\rho(W)\leq r$, so $\lceil\rho(W)\rceil=\kappa(W)$.
Set $\rho(0)=0$.

\begin{lemma}\label{lem:spectral-trace}
For every nonzero real symmetric matrix $W$ of trace zero,
\begin{equation}\label{eq:rho-sdp}
 \rho(W)=\min\{\tr Q:0\preceq Q\preceq I,
                              \ip{W}{Q}\leq-\lambda_1(W)\}.
\end{equation}
\end{lemma}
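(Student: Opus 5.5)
This is a linear program over the spectrum once $Q$ is written in the eigenbasis of $W$. Write $W=\sum_i \lambda_i u_iu_i^T$ with $\lambda_i=\lambda_i(W)$. For any feasible $Q$, set $t_i=u_i^TQu_i$. Then $0\le t_i\le 1$ because $0\preceq Q\preceq I$. Also $\tr Q=\sum_i t_i$ and $\ip{W}{Q}=\sum_i\lambda_i t_i$. Conversely, any $t\in[0,1]^n$ gives the feasible diagonal matrix $Q=\sum_i t_iu_iu_i^T$ with the same trace and inner product. So the right-hand side of \eqref{eq:rho-sdp} equals
\[
 \min\Bigl\{\textstyle\sum_i t_i:\ t\in[0,1]^n,\ \sum_i\lambda_i t_i\le-\lambda_1\Bigr\}.
\]
I will show that this LP has value $\rho(W)$.

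\emph{Upper bound.} Take $t_{n+1-i}=1$ for $i<r$ and $t_{n+1-r}=\theta$, where $\theta=(\lambda_1-\sum_{i<r}b_i)/b_r\in(0,1]$; all other $t_j$ are zero. Then $\sum_j\lambda_jt_j=-\sum_{i<r}b_i-\theta b_r=-\lambda_1$, and $\sum_j t_j=r-1+\theta=\rho(W)$.

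\emph{Lower bound.} This is the main step, a fractional-knapsack greedy argument. The constraint says the weighted mass $\sum_j(-\lambda_j)t_j$ must be at least $\lambda_1>0$, where $\lambda_1>0$ because $W\ne0$ has trace zero. Coordinates with $\lambda_j\ge0$ contribute nonpositively, so setting them to zero keeps feasibility and lowers the objective. Among coordinates with $\lambda_j<0$, each unit of $t_j$ contributes at most the largest remaining value of $-\lambda_j$.

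To make this precise, use LP duality with multiplier $y=1/b_r$ on the constraint. For every feasible $t$,
\[
 \sum_j t_j\;\ge\;\sum_j t_j+y\Bigl(\sum_j\lambda_jt_j+\lambda_1\Bigr)
 =y\lambda_1+\sum_j t_j(1+y\lambda_j).
\]
The terms with $1+y\lambda_j\ge0$ contribute at least zero. The terms with $1+y\lambda_j<0$ satisfy $-\lambda_j>b_r$, so they are among the $b_i$ with $i<r$, and each contributes at least $1+y\lambda_j$ since $t_j\le1$. Summing these (the indices with $b_i=b_r$ contribute $0$ anyway) gives
\[
 \sum_j t_j\;\ge\;y\lambda_1+\sum_{i<r}(1-yb_i)=r-1+\frac{\lambda_1-\sum_{i<r}b_i}{b_r}=\rho(W).
\]

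The only care needed is in the bookkeeping: using $b_r>0$ (from the minimality of $r$), and handling ties among eigenvalues equal to $-b_r$, which the dual argument absorbs automatically. The minimum in \eqref{eq:rho-sdp} is attained because the feasible set is compact and nonempty.
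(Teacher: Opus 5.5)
Your proof is correct and follows the paper's route. You reduce to the linear program over $t\in[0,1]^n$ in an eigenbasis of $W$, and its optimum is the greedy solution that fills the most negative eigenvalues first; the paper simply asserts this optimality, while you certify it with the explicit dual multiplier $y=1/b_r$, which handles ties $b_i=b_r$ correctly.
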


\begin{proof}
In an orthonormal eigenbasis of $W$, the diagonal entries of $Q$ lie in $[0,1]$ and determine both its trace and $\ip{W}{Q}$. Replacing $Q$ by its diagonal preserves feasibility and both quantities. The resulting linear program minimizes $\sum_i q_i$ subject to $0\leq q_i\leq1$ and $\sum_i\lambda_i(W)q_i\leq-\lambda_1(W)$. An optimum uses the most negative eigenvalues first: it takes the first $r-1$ of them with coefficient one and the next with the fraction in \eqref{eq:rho}. This gives~\eqref{eq:rho-sdp}.
\end{proof}

\begin{theorem}\label{thm:strong-hoffman}
For the adjacency matrix $A$ of every graph $G$,
\begin{equation}\label{eq:rho-theta}
 1+\rho(A)\leq\vartheta(\overline G),
 \qquad
 1+\kappa(G)\leq\lceil\vartheta(\overline G)\rceil.
\end{equation}
\end{theorem}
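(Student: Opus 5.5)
We plan to feed a leading eigenvector of $A$ into Lemma~\ref{lem:normalization} and use the resulting contraction as a feasible point in Lemma~\ref{lem:spectral-trace}. The second inequality in~\eqref{eq:rho-theta} then follows from the first by taking ceilings: $\lceil\rho(A)\rceil=\kappa(G)$ and $1+\kappa(G)$ is an integer, so it suffices to prove $1+\rho(A)\leq\vartheta(\overline G)$.

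If $G$ has no edges, then $A=0$, $\rho(0)=0$, and $\vartheta(\overline G)=\vartheta(K_n)=1$, so the inequality holds. Assume now that $A\neq0$, and let $u$ be a unit eigenvector for $\lambda_1(A)$. By Perron--Frobenius we may take $u\geq0$. Put $p=u\circ u$, so that $\one^Tp=1$ and $\sqrt p=u$. Lemma~\ref{lem:normalization} gives $R$ with
\[
uu^T\preceq R\preceq I,\qquad R_{ij}=0\ (ij\in E),\qquad Ru=u,\qquad \tr R\leq\vartheta(\overline G).
\]

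Take $Q=R-uu^T$. Since $R\preceq I$ and $Ru=u$, the matrix $R$ is a contraction fixing $u$, so $Q$ is the compression of $R$ to $u^\perp$. Hence $0\preceq Q\preceq I$ and $\tr Q=\tr R-1\leq\vartheta(\overline G)-1$.

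It remains to check the constraint in~\eqref{eq:rho-sdp}. Because $R$ vanishes on the edges of $G$ and $A$ has zero diagonal and is supported exactly on those edges, $\ip{A}{R}=0$. Therefore
\[
\ip{A}{Q}=\ip{A}{R}-u^TAu=-\lambda_1(A).
\]
So $Q$ is feasible in~\eqref{eq:rho-sdp}, and Lemma~\ref{lem:spectral-trace} gives $\rho(A)\leq\tr Q\leq\vartheta(\overline G)-1$.

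The only delicate point is $u\geq0$, which is needed to write $u=\sqrt p$ for some $p\geq0$. For a disconnected graph the eigenvector may not be unique, but Perron--Frobenius still supplies a nonnegative one, for example one supported on a component attaining $\lambda_1$.
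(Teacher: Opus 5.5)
Your proposal is correct and follows essentially the same route as the paper: apply Lemma~\ref{lem:normalization} to $p=u\circ u$ for a nonnegative Perron eigenvector $u$, use $Q=R-uu^T$ as a feasible point in Lemma~\ref{lem:spectral-trace}, and then take ceilings. Your added remarks on $Q$ as a compression, the edgeless case, and the disconnected case for Perron--Frobenius only fill in details the paper leaves implicit.
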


\begin{proof}
The result is immediate for an edgeless graph. Otherwise, choose a nonnegative unit eigenvector $u$ of $A$ for $\lambda_1(A)$ and apply Lemma~\ref{lem:normalization} to $p_i=u_i^2$. The matrix $Q=R-uu^T$ satisfies $0\preceq Q\preceq I$ and
\[
 \ip{A}{Q}=\ip{A}{R}-u^TAu=-\lambda_1(A),
 \qquad \tr Q=\tr R-1\leq\vartheta(\overline G)-1.
\]
Here $\ip{A}{R}=0$ follows from the edge zeros of $R$. Lemma~\ref{lem:spectral-trace} proves the first inequality. Taking ceilings gives the second.
\end{proof}

The standard bound $\vartheta(\overline G)\leq\xi(G)$ now answers the question of Wocjan, Elphick, and Darbari in \cite{WocjanElphickDarbari2020}.

\begin{corollary}\label{cor:orthogonal}
For every graph $G$, $1+\kappa(G)\leq\xi(G)$.
\end{corollary}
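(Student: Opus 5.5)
The plan is to combine Theorem~\ref{thm:strong-hoffman} with the classical inequality $\vartheta(\overline G)\leq\xi(G)$, and then use integrality of $\xi(G)$. The second inequality in \eqref{eq:rho-theta} gives $1+\kappa(G)\leq\lceil\vartheta(\overline G)\rceil$. If $\vartheta(\overline G)\leq\xi(G)$ holds, then $\lceil\vartheta(\overline G)\rceil\leq\xi(G)$ because $\xi(G)$ is an integer, and the corollary follows. The edgeless case is trivial, since $\kappa(0)=0$ and $\xi(G)\geq1$.

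The only real work is justifying $\vartheta(\overline G)\leq\xi(G)$ for complex orthogonal representations. I would do this through the trace-one formulation \eqref{eq:theta-duality} applied to $\overline G$:
\[
\vartheta(\overline G)=\max\{\ip{J}{X}:X\succeq0,\ \tr X=1,\ X_{ij}=0\ (ij\in E(G))\}.
\]
Since this is an SDP maximum, the bound requires a dual certificate. I would instead build a feasible matrix for the dual (minimization) form of theta from the orthogonal representation. Alternatively, I would use $\vartheta(\overline G)\leq\vartheta(\overline G;\cdot)$ together with the antiblocking identity \eqref{eq:antiblocking}. Concretely, the plan is to show $\one/d\in\ThetaBody(G)$, where $d=\xi(G)$. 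Antiblocking then gives $\one^Ty\leq d$ for every $y\in\ThetaBody(\overline G)$, hence $\vartheta(\overline G)\leq d$.

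To exhibit $\one/d\in\ThetaBody(G)$, take unit vectors $v_i\in\C^d$ with $v_i^*v_j=0$ for $ij\in E$. Set $A_{ij}=\tfrac1d|v_i^*v_j|^2$. This vanishes on the edges and has diagonal $\tfrac1d\one$. We also need $A\succeq\tfrac1{d^2}J$. Write $A=\tfrac1d\bigl(\tr(P_iP_j)\bigr)_{ij}$, where $P_i=v_iv_i^*$, which is a real PSD Gram matrix in the Hilbert–Schmidt inner product. Then
\[
x^T(A-\tfrac1{d^2}J)x=\tfrac1d\Bigl\|\sum_i x_iP_i\Bigr\|_F^2-\tfrac1{d^2}\Bigl(\tr\sum_i x_iP_i\Bigr)^2\geq0
\]
by Cauchy–Schwarz, since $(\tr M)^2\leq d\|M\|_F^2$ for any Hermitian $d\times d$ matrix $M$. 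This certifies membership in \eqref{eq:theta-body}.

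I expect the main obstacle to be this passage from complex vectors to a real certificate. The rest is bookkeeping: the real PSD Gram matrix of the projections $P_i$ handles it, and the trace Cauchy–Schwarz inequality supplies exactly the factor $1/d$. Since the paper calls the bound standard, it could also simply be cited.
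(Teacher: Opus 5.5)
Your proposal is correct and matches the paper, which also combines the second inequality of Theorem~\ref{thm:strong-hoffman} with the standard bound $\vartheta(\overline G)\leq\xi(G)$ and the integrality of $\xi(G)$. The paper only cites that standard bound, whereas you prove it. Your proof is valid: you show $\one/\xi(G)\in\ThetaBody(G)$ via the real Hilbert--Schmidt Gram matrix of the projectors $v_iv_i^*$ and the inequality $(\tr M)^2\leq d\|M\|_F^2$, and then apply antiblocking.
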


\subsection{Optimization over matrix weights}\label{sec:weighted-hoffman}

The argument extends to matrix weights by a diagonal change of
signs in the lifting.

\begin{theorem}\label{thm:weighted-hoffman}
For every $W\in\mathcal W(G)$,
\[
 1+\rho(W)\leq\vartheta(\overline G),
 \qquad 1+\kappa(W)\leq\lceil\vartheta(\overline G)\rceil.
\]
Consequently,
\begin{equation}\label{eq:optimized-hoffman}
 h(G)=\lceil\vartheta(\overline G)\rceil.
\end{equation}
\end{theorem}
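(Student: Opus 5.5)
We follow the proof of Theorem~\ref{thm:strong-hoffman}, with the Perron eigenvector replaced by an arbitrary unit leading eigenvector of $W$ and the sign pattern absorbed into a diagonal similarity.

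\textbf{The zero case.} If $W=0$ then $\rho(W)=0$ and $\kappa(W)=0$. Both claims reduce to $1\leq\vartheta(\overline G)$, which holds since $n\geq1$.

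\textbf{Setting up the lifting.} Let $W\ne0$ and let $u$ be a unit eigenvector of $W$ for $\lambda_1(W)$; it need not be nonnegative. Let $S=\Diag(\sigma)$, where $\sigma_i=\operatorname{sign}(u_i)$ and we take $\sigma_i=1$ when $u_i=0$. Then $S^2=I$ and $Su=|u|$ coordinatewise. Apply Lemma~\ref{lem:normalization} with $p_i=u_i^2$; note $\sqrt p=|u|$ and $\one^Tp=1$. This gives $R$ with
\[
|u||u|^T\preceq R\preceq I,\qquad R_{ij}=0\ (ij\in E),\qquad \tr R\leq\vartheta(\overline G).
\]
Set $R'=SRS$ and $Q=R'-uu^T$.

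\textbf{Checking feasibility.} Conjugating by the orthogonal matrix $S$ preserves the order relations, so $uu^T=S|u||u|^TS\preceq R'\preceq I$. Hence $0\preceq Q\preceq I$. Also $R'$ keeps the edge zeros of $R$, and $W$ is supported only on the edges of $G$ (its diagonal vanishes). Therefore $\ip{W}{R'}=0$, and
\[
\ip{W}{Q}=-u^TWu=-\lambda_1(W),\qquad \tr Q=\tr R-1\leq\vartheta(\overline G)-1.
\]
Since $W\in\mathcal W(G)$ has trace zero, Lemma~\ref{lem:spectral-trace} applies and gives $\rho(W)\leq\tr Q$. This proves $1+\rho(W)\leq\vartheta(\overline G)$. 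Taking ceilings, with $\lceil\rho(W)\rceil=\kappa(W)$, gives $1+\kappa(W)\leq\lceil\vartheta(\overline G)\rceil$.

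\textbf{The optimized bound.} Maximizing over $W\in\mathcal W(G)$ gives $h(G)\leq\lceil\vartheta(\overline G)\rceil$. The reverse inequality is \eqref{eq:ccg-lower}, so \eqref{eq:optimized-hoffman} follows.

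The one step that needs care is that the lifting lemma only produces a rank-one lower bound by the nonnegative vector $\sqrt p$. The sign conjugation $S$ converts this into the lower bound $uu^T$. It does so without disturbing the zero pattern, the contraction property or the trace, and that is why the argument extends from adjacency matrices to signed matrix weights.
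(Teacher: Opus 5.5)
Your proposal is correct and follows the paper's proof essentially step for step. You apply Lemma~\ref{lem:normalization} to $p_i=u_i^2$, conjugate by a diagonal $\pm1$ matrix to turn the lower bound $\sqrt p\,\sqrt p^{\,T}$ into $uu^T$, and feed $Q=R'-uu^T$ into Lemma~\ref{lem:spectral-trace}, with \eqref{eq:ccg-lower} supplying the reverse inequality for $h(G)$.
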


\begin{proof}
The case $W=0$ is immediate. Otherwise, take a real unit eigenvector $u$ for $\lambda_1(W)$. Apply Lemma~\ref{lem:normalization} with $p_i=u_i^2$ to obtain $R_0$, and let $S$ be diagonal with entries in $\{-1,1\}$ chosen so that $S\sqrt p=u$. Then $R=SR_0S$ satisfies $uu^T\preceq R\preceq I$, has the edge zeros of $G$, and has the same diagonal as $R_0$. For $Q=R-uu^T$ we therefore have
\[
 0\preceq Q\preceq I,\qquad
 \ip{W}{Q}=-\lambda_1(W),\qquad
 \tr Q\leq\vartheta(\overline G)-1.
\]
Lemma~\ref{lem:spectral-trace} gives the inequalities. Maximizing over $W$ and using \eqref{eq:ccg-lower} proves~\eqref{eq:optimized-hoffman}.
\end{proof}

The fractional refinement is exact as well.

\begin{corollary}\label{cor:optimized-fractional}
For every graph $G$,
\[
 \max_{W\in\mathcal W(G)}(1+\rho(W))=\vartheta(\overline G).
\]
\end{corollary}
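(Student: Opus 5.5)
The upper bound $\max_{W\in\mathcal W(G)}(1+\rho(W))\leq\vartheta(\overline G)$ is already Theorem~\ref{thm:weighted-hoffman}, so only the lower bound needs work. I plan to get it from the weighted Hoffman characterization~\eqref{eq:theta-ratio}. The edgeless case is handled separately: there $\mathcal W(G)=\{0\}$, $\rho(0)=0$, and $\vartheta(\overline G)=\vartheta(K_n)=1$, so equality holds trivially. Assume from now on that $G$ has an edge.

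The key step is a pointwise comparison of $\rho$ with the Hoffman ratio. I will show that for every nonzero trace-zero symmetric $W$,
\[
 \rho(W)\geq\frac{\lambda_1(W)}{-\lambda_n(W)}.
\]
I would prove this from Lemma~\ref{lem:spectral-trace}. Take any $Q$ with $0\preceq Q\preceq I$ and $\ip{W}{Q}\leq-\lambda_1(W)$. Since $Q\succeq0$, we have $\ip{W}{Q}\geq\lambda_n(W)\tr Q$. Combining the two inequalities gives $\lambda_n(W)\tr Q\leq-\lambda_1(W)$. Because $W\neq0$ has trace zero, $\lambda_n(W)<0$, so dividing yields $\tr Q\geq\lambda_1(W)/(-\lambda_n(W))$. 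Minimizing over feasible $Q$ gives the claim.

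As a cross-check, the same inequality follows directly from~\eqref{eq:rho}. The first $r-1$ eigenvalues used each satisfy $b_i\leq-\lambda_n(W)$, and so does the fractional weight $b_r$. Hence the weighted sum defining $\rho$ dominates $\lambda_1(W)/(-\lambda_n(W))$.

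With the comparison in hand, I take $W\in\mathcal W(G)$ nonzero attaining the maximum in~\eqref{eq:theta-ratio}, or a supremizing sequence if attainment is unclear. For this $W$,
\[
 1+\rho(W)\geq 1+\frac{\lambda_1(W)}{-\lambda_n(W)}=\vartheta(\overline G).
\]
Combined with the upper bound from Theorem~\ref{thm:weighted-hoffman}, this gives equality.

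The only subtle point is attainment of the maximum. If only a supremum is available, I still get $\sup\bigl(1+\rho(W)\bigr)=\vartheta(\overline G)$. Since~\eqref{eq:theta-ratio} is stated with a maximum, I will simply invoke it as stated. Otherwise I would argue attainment by compactness: the ratio is invariant under scaling $W$, so one may restrict to the unit sphere of $\mathcal W(G)$, where it is continuous because $\lambda_n(W)<0$ for nonzero trace-zero $W$.
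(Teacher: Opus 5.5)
Your proposal is correct and follows the paper's proof essentially step for step: the upper bound from Theorem~\ref{thm:weighted-hoffman}, the comparison $\rho(W)\geq\lambda_1(W)/(-\lambda_n(W))$ obtained from \eqref{eq:rho-sdp}, the reverse bound from \eqref{eq:theta-ratio} when $G$ has an edge, and the separate edgeless case. Your justification that $\ip{W}{Q}\geq\lambda_n(W)\tr Q$ for $Q\succeq0$ supplies exactly the step the paper leaves implicit.
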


\begin{proof}
Theorem~\ref{thm:weighted-hoffman} gives the upper bound. For $W\ne0$, formula~\eqref{eq:rho-sdp} implies $\rho(W)\geq\lambda_1(W)/(-\lambda_n(W))$. Equation~\eqref{eq:theta-ratio} gives the reverse bound when $G$ has an edge. Both sides are one for an edgeless graph.
\end{proof}

\section{A stronger lifting lemma}\label{sec:stronger}

The proof of Lemma~\ref{lem:normalization} used two complementary matrix certificates and the Schur product theorem. A different proof works for every closed cone of positive semidefinite matrices preserved by nonnegative diagonal congruence. We formulate it in the PSD-lifting framework of Carli Silva and Tun\c{c}el~\cite{DeCarliSilvaTuncel2017}, then apply it to two different problems.

\subsection{PSD liftings and diagonal congruence}\label{sec:cone-lifting}

Let $\cK\subseteq\Smat_+^V$ be a closed convex cone containing all
nonnegative diagonal matrices. Assume that
\begin{equation}\label{eq:diagonal-invariance}
 X\in\cK,\ d\in\R_+^V
 \quad\Longrightarrow\quad \Diag(d)X\Diag(d)\in\cK.
\end{equation}
Following the PSD-lifting framework
of~\cite{DeCarliSilvaTuncel2017}, define
\[
 \Psd(\cK)=\left\{
 \begin{pmatrix}t&x^T\\x&X\end{pmatrix}\succeq0:X\in\cK\right\},
\]
and let its theta corner be
\begin{equation}\label{eq:cone-corner}
 \cC_{\cK}=\{x\in\R_+^V:\exists X\in\cK,
                         \ \diag X=x,\ X\succeq xx^T\}.
\end{equation}
This is $\ThetaBody(\Smat^V,\Psd(\cK))$ in their notation.
It is a convex corner containing the standard simplex. Its support
function has the trace-one representation
\begin{equation}\label{eq:cone-support}
 h_{\cC_{\cK}}(w)
   =\max\{\sqrt w^{\,T}X\sqrt w:X\in\cK,\ \tr X=1\}
   \qquad(w\geq0).
\end{equation}
These facts follow from the PSD-lifting and support-function results
in~\cite[Corollary~2 and Theorem~5]{DeCarliSilvaTuncel2017}.

We use the following elementary fact about copositive matrices. If $T$ is symmetric, $a^T T a \geq0$ for all $a\geq0$, and $v>0$ satisfies $v^T T v=0$, then $(v+\varepsilon z)^T T(v+\varepsilon z)\geq0$ for every $z$ and all sufficiently small positive and negative $\varepsilon$. Its linear coefficient must vanish, and then $z^T T z\geq0$ for every $z$.

\begin{lemma}\label{lem:cone-normalization}
Let $\cK$ be a closed convex cone of positive semidefinite matrices containing all
nonnegative diagonal matrices. Assume $\cK$ is diagonally scaling-invariant, as in \eqref{eq:diagonal-invariance}. For every vector $p\in\R_+^V$ with $\one^Tp=1$, there exists $R\in\cK$ such that 
\begin{equation}\label{eq:cone-normalization}
 \sqrt p\,\sqrt p^{\,T}\preceq R\preceq I,
 \qquad \diag R\in\abl(\cC_{\cK}).
\end{equation}
In particular, $R\sqrt p=\sqrt p$.
\end{lemma}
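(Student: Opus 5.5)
The plan is to rerun the proof of Lemma~\ref{lem:normalization} with a single certificate. The Schur-product step, which needed the complementary certificate $B$, will be replaced by the support-function representation \eqref{eq:cone-support} together with the copositivity fact stated just before the lemma.

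First, apply Theorem~\ref{thm:entropy} to the convex corner $\cC_{\cK}$. This gives $x\in\cC_{\cK}$ and $y\in\abl(\cC_{\cK})$ with $x_iy_i=p_i$. Choose $X\in\cK$ with $\diag X=x$ and $X\succeq xx^T$. Let $S=\supp p$ and define the diagonal matrix $D$ by $D_{ii}=y_i/\sqrt{p_i}$ for $i\in S$ and $D_{ii}=0$ otherwise. Set $R=DXD$.

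Three of the required properties are then direct.
\begin{itemize}
\item By \eqref{eq:diagonal-invariance}, $R\in\cK$.
\item Conjugating $X\succeq xx^T$ by $D$ gives $R\succeq (Dx)(Dx)^T=\sqrt p\,\sqrt p^{\,T}$, since $y_ix_i/\sqrt{p_i}=\sqrt{p_i}$ on $S$.
\item For $i\in S$ we have $R_{ii}=y_i^2x_i/p_i=y_i$, and $R_{ii}=0$ off $S$. Hence $0\le\diag R\le y$, and $\diag R\in\abl(\cC_{\cK})$ because antiblockers are closed under coordinatewise decrease.
\end{itemize}
The final claim $R\sqrt p=\sqrt p$ then follows from $vv^T\preceq R\preceq I$ with $\|v\|=1$, exactly as before.

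The main step is $R\preceq I$. Since $R$ vanishes outside $S\times S$, it suffices to show that the $S\times S$ matrix
\[
T=\Diag(p)-\Diag(y)X\Diag(y)
\]
is positive semidefinite; conjugating by $\Diag(p)^{\dagger/2}$ on $S$ then gives $I-R\succeq0$.

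First, $T$ is copositive. Fix $a\ge0$ supported in $S$ and put $d_i=a_i\sqrt{y_i}$, so that $X'=\Diag(d)X\Diag(d)\in\cK$. Then
\[
\tr X'=\sum_i a_i^2y_ix_i=\sum_ip_ia_i^2,
\qquad
\sqrt y^{\,T}X'\sqrt y=(a\circ y)^TX(a\circ y).
\]
Since $y\in\abl(\cC_{\cK})$, we have $h_{\cC_{\cK}}(y)\le1$. By \eqref{eq:cone-support}, applied after scaling $X'$ to trace one (the case $\tr X'=0$ is handled by noting that $X'\succeq0$ then forces $X'=0$), we get $(a\circ y)^TX(a\circ y)\le\sum_ip_ia_i^2$. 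This says exactly $a^TTa\ge0$.

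Second, $T$ has a positive zero. Take $v=\one_S>0$ on $S$. Then $v^TTv=1-y_S^TX_{SS}y_S$. Because $x_i=0$ whenever $y_i\ne0$ and $i\notin S$, we have $x^Ty=x_S^Ty_S=\one^Tp=1$. This gives
\[
y_S^TX_{SS}y_S\ge(x_S^Ty_S)^2=1,
\]
so $v^TTv\le0$. Copositivity gives the reverse inequality, so $v^TTv=0$.

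The copositivity fact quoted before the lemma now shows that $T\succeq0$, which completes the argument. I expect the main care to lie in the bookkeeping on $V\setminus S$ (coordinates with $p_i=0$ but possibly $y_i\ne0$), which is why everything is restricted to $S$ before invoking the copositive lemma.
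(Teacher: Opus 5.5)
Your proof is correct and follows the paper's argument: entropy splitting, a diagonal rescaling $R=DXD$ of a single certificate, copositivity from the trace-one support representation \eqref{eq:cone-support}, and the positive-zero fact to upgrade copositivity to $R\preceq I$. Your $T$ is just the $S\times S$ block of $I-R$ conjugated by $\Diag(\sqrt{p_S})$. The one real difference is that you handle zero coordinates of $p$ directly by restricting to $S=\supp p$. The paper instead first assumes $p>0$ and then passes to a limit of perturbed probability vectors, using closedness of $\cK$ and $\abl(\cC_{\cK})$; your restriction avoids that compactness step at no extra cost.
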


\begin{proof}
First let $p>0$. By Theorem~\ref{thm:entropy}, choose positive $x\in\cC_{\cK}$ and $y\in\abl(\cC_{\cK})$ with $x_i y_i=p_i$. Take $X\in\cK$ with $\diag X=x$ and $X\succeq xx^T$, and put
\[
 D=\Diag(\sqrt{y/x}),\qquad R=DXD,\qquad v=\sqrt p.
\]
Then, as before, $R\in\cK$, $R\succeq vv^T$, and $\diag R=y$. It remains to prove $R\preceq I$.

Since $x^Ty=1$, the support value $h_{\cC_{\cK}}(y)$ is one. For any $a\geq0$, let $T=\Diag(a/\sqrt y)$ and $B=TRT\in\cK$. We have $\tr B=\|a\|^2$. Equation~\eqref{eq:cone-support}, applied to $B/\tr B$ when $a\ne0$, gives
\[
 a^TRa=\sqrt y^{\,T}B\sqrt y\leq\tr B=\|a\|^2.
\]
Thus $I-R$ is copositive. Because $R\succeq vv^T$ and $\|v\|=1$, we have $v^T(I-R)v\leq0$, while copositivity gives the reverse inequality. The positive-zero observation above implies $I-R\succeq0$.

For a probability vector with zero coordinates, replace $p$ by $(p+\varepsilon\one)/(1+n\varepsilon)$ and take a convergent subsequence. Closedness of $\cK$ and $\abl(\cC_{\cK})$, together with compactness of the contraction interval, preserves the claims. The eigenvector assertion follows as in Lemma~\ref{lem:normalization}.
\end{proof}

We spell out three graph cones:
\begin{align*}
 \cK_0(G)&=\{X\succeq0:X_{ij}=0\ (ij\in E)\},\\
 \cK_-(G)&=\{X\succeq0:X\geq0,\ X_{ij}=0\ (ij\in E)\},\\
 \cK_+(G)&=\{X\succeq0:X_{ij}\leq0\ (ij\in E)\}.
\end{align*}
Their corners are denoted by $\ThetaBody(G)$, $\ThetaBody^-(G)$, and $\ThetaBody^+(G)$,
respectively. Their support functions at $w\geq0$ are
$\vartheta(G;w)$, $\vartheta^-(G;w)$, and $\vartheta^+(G;w)$.
The latter two are the Schrijver and Szegedy variants
\cite{Schrijver1979,Szegedy1994}. The antiblocking identities are
\begin{equation}\label{eq:plus-minus-duality}
 \abl(\ThetaBody^-(G))=\ThetaBody^+(\overline G),\qquad
 \abl(\ThetaBody^+(G))=\ThetaBody^-(\overline G);
\end{equation}
see~\cite{DeCarliSilvaTuncel2017}.
Consequently, the stronger theorem gives either a nonnegative
contraction with edge zeros and diagonal in $\ThetaBody^+(\overline G)$,
or a contraction with nonpositive edge entries and diagonal in
$\ThetaBody^-(\overline G)$.

\subsection{The Kuryatnikova--Sotirov--Vera conjecture}\label{sec:ksv}

Kuryatnikova, Sotirov, and Vera~\cite[Conjecture~1]{KuryatnikovaEtAl2022} compared two semidefinite relaxations with entrywise nonnegativity for the maximum $k$-colorable subgraph problem. In the notation below, their conjecture is the unweighted inequality $\theta_k^-(G;\one)\leq\vartheta_k^-(G;\one)$.
For real $0<k\leq n$ and $w\geq0$, define
\begin{align}
 \vartheta_k^-(G;w)
 &=\max\{\sqrt w^{\,T}X\sqrt w:
       X\in\cK_-(G),\ X\preceq I,\ \tr X=k\},
       \label{eq:nonnegative-nm}\\
 \theta_k^-(G;w)
 &=\max\{w^Tz:z\in k\ThetaBody^-(G)\cap[0,1]^V\}.
       \label{eq:nonnegative-body}
\end{align}
In particular, feasibility in the second formulation means that $0\leq z\leq\one$ and there exists $Z\in\cK_-(G)$ with $\diag Z=z$ and $Z\succeq zz^T/k$.
These are the two relaxations in their conjecture, with $\theta_k^-$ corresponding to their matrix-lifting formulation $\theta_k^3$. As before, one may replace $\tr X=k$ by $\tr X\leq k$ in~\eqref{eq:nonnegative-nm}.

\begin{theorem}\label{thm:ksv}
For every graph $G$, every real $0<k\leq n$, and every $w\geq0$,
\begin{equation}\label{eq:ksv-comparison}
 \theta_k^-(G;w)\leq\vartheta_k^-(G;w).
\end{equation}
Moreover, the greatest convex minorant in the vertex weights satisfies
\begin{equation}\label{eq:ksv-convexification}
 (\vartheta_k^-)^{\mathrm{conv}}(G;w)
 =\theta_k^-(G;w)
 =\min_{0\leq a\leq w}
        \{k\vartheta^-(G;a)+\one^T(w-a)\}.
\end{equation}
\end{theorem}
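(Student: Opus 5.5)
The plan is to repeat the proofs of Theorem~\ref{thm:weighted-domination} and Theorem~\ref{thm:convexification}, with Lemma~\ref{lem:cone-normalization} applied to the cone $\cK=\cK_-(G)$ in place of Lemma~\ref{lem:normalization}. This cone is closed and convex, lies in $\Smat_+^V$, and contains all nonnegative diagonal matrices. It is invariant under $X\mapsto\Diag(d)X\Diag(d)$ for $d\geq0$, since such a congruence preserves positive semidefiniteness, entrywise nonnegativity, and the edge zeros. Its corner is $\cC_{\cK}=\ThetaBody^-(G)$, and by \eqref{eq:plus-minus-duality} we have $\abl(\ThetaBody^-(G))=\ThetaBody^+(\overline G)$.

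For \eqref{eq:ksv-comparison}, take $z\in k\ThetaBody^-(G)\cap[0,1]^V$ with $s=w^Tz>0$; the case $s=0$ is trivial. Apply Lemma~\ref{lem:cone-normalization} with $p_i=w_iz_i/s$. This gives $R\in\cK_-(G)$ with $\sqrt p\sqrt p^{\,T}\preceq R\preceq I$ and $\diag R\in\abl(\ThetaBody^-(G))$. Set $M=\Diag(\sqrt z)R\Diag(\sqrt z)$, and check the required properties as follows.
\begin{itemize}
\item Diagonal invariance gives $M\in\cK_-(G)$.
\item From $R\preceq I$ and $z\leq\one$ we get $M\preceq\Diag(z)\preceq I$.
\item Since $z/k\in\ThetaBody^-(G)$ and $\diag R$ lies in its antiblocker, $\tr M=z^T\diag R\leq k$.
\item The objective is $\sqrt w^{\,T}M\sqrt w=s\,\sqrt p^{\,T}R\sqrt p=s$, using $\sqrt z_i\sqrt w_i=\sqrt{s p_i}$ and $R\sqrt p=\sqrt p$.
\end{itemize}
By the trace-inequality version of \eqref{eq:nonnegative-nm}, $M$ is feasible with value $w^Tz$. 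Maximizing over $z$ proves \eqref{eq:ksv-comparison}.

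For \eqref{eq:ksv-convexification}, first note that $\theta_k^-(G;\cdot)$ is the support function of a convex set, hence convex. By \eqref{eq:ksv-comparison} it is dominated by $\vartheta_k^-(G;\cdot)$, so $\theta_k^-\leq(\vartheta_k^-)^{\mathrm{conv}}$. For the reverse inequality, take $0\leq a\leq w$ and use subadditivity of the minorant \eqref{eq:conv-decomposition}:
\[
(\vartheta_k^-)^{\mathrm{conv}}(G;w)\leq\vartheta_k^-(G;a)+\vartheta_k^-(G;w-a)\leq k\vartheta^-(G;a)+\one^T(w-a).
\]
Two facts justify the last step. Dividing a feasible $X$ by $k$ gives a trace-one matrix in $\cK_-(G)$, and \eqref{eq:cone-support} then bounds the first term by $k\vartheta^-(G;a)$. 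The bound $X\preceq I$ gives the second term. Minimizing over $a$ and applying Lemma~\ref{lem:splitting} with $C=\ThetaBody^-(G)$, a convex corner, identifies the minimum with $h_{k\ThetaBody^-(G)\cap[0,1]^V}(w)=\theta_k^-(G;w)$, which closes the chain.

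The only substantive step is the first one: confirming that $\cK_-(G)$ satisfies the hypotheses of Lemma~\ref{lem:cone-normalization}, and that the scaled matrix $M$ stays in the cone while $\diag R$ is paired with $z/k$ through the antiblocking identity. The rest transcribes the earlier proofs verbatim. One care point is that the trace-inequality form of \eqref{eq:nonnegative-nm} is stated in the text, so feasibility with $\tr M\leq k$ is enough.
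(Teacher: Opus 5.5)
Your proposal is correct and follows the paper's proof essentially step for step: you apply Lemma~\ref{lem:cone-normalization} to $\cK_-(G)$ with $p_i=w_iz_i/s$, rescale to $M=\Diag(\sqrt z)R\Diag(\sqrt z)$, and then rerun the convexification argument of Theorem~\ref{thm:convexification} with Lemma~\ref{lem:splitting} for $C=\ThetaBody^-(G)$. Two of your points are small additions rather than a different route: you check the cone hypotheses explicitly, which the paper leaves implicit, and you get $\tr M\leq k$ directly from the antiblocker definition instead of citing \eqref{eq:plus-minus-duality}.
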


\begin{proof}
Let $z\in k\ThetaBody^-(G)\cap[0,1]^V$ with $s=w^Tz>0$. Apply Lemma~\ref{lem:cone-normalization} to $\cK_-(G)$ and $p_i=w_i z_i/s$. The resulting $R$ is entrywise nonnegative, has the edge zeros of $G$, and has diagonal in $\ThetaBody^+(\overline G)$. Set $M=\Diag(\sqrt z)R\Diag(\sqrt z)$. Then
\[
 M\in\cK_-(G),\qquad M\preceq\Diag(z)\preceq I,\qquad
 \tr M=z^T\diag R\leq k,
\]
where the last inequality follows from~\eqref{eq:plus-minus-duality}. Also $\sqrt w^{\,T}M\sqrt w=s$. This proves \eqref{eq:ksv-comparison}; the case $s=0$ is immediate.

For the convexification, use Lemma~\ref{lem:splitting} with
$C=\ThetaBody^-(G)$. The proof of Theorem~\ref{thm:convexification} then
applies verbatim: the support function is a convex minorant by the
comparison just proved, and
\[
 \vartheta_k^-(G;a)\leq k\vartheta^-(G;a),\qquad
 \vartheta_k^-(G;w-a)\leq\one^T(w-a).
\]
These inequalities give the reverse comparison with the greatest
convex minorant.
\end{proof}

Thus the conjectured ordering holds with arbitrary nonnegative vertex weights and real trace budgets. The smaller relaxation also has an exact interpretation: it is the convexification of the larger one as a function of the vertex weights.

\subsection{Graph energy and a weakening of Fajtlowicz's conjecture}
\label{sec:energy}

The energy of a graph with adjacency matrix $A$ is $\cE(G)=\sum_i|\lambda_i(A)|$.
Since $\tr A=0$, its half-energy is the sum of the positive eigenvalues, and hence
\begin{equation}\label{eq:energy-sdp}
 \frac{\cE(G)}2=\max\{\ip{A}{X}:0\preceq X\preceq I\}.
\end{equation}
Fajtlowicz conjectured that
\begin{equation}\label{eq:fajtlowicz}
 \frac{\cE(G)}2\geq n-\alpha(G).
\end{equation}
Abiad, Coutinho, Juliano, and Reijnders~\cite[Theorem~1.6]{AbiadEtAl2026} used semidefinite programming to study this conjecture and proved, under their high-regularity hypothesis, the weaker inequality $\cE(G)/2\geq n-\vartheta^-(G)$.
The stronger lifting gives this latter inequality for every graph.

\begin{theorem}\label{thm:energy}
For every graph $G$ on $n$ vertices,
\begin{equation}\label{eq:energy-bound}
 \frac{\cE(G)}2\geq n-\vartheta^-(G).
\end{equation}
\end{theorem}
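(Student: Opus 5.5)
The plan is to exhibit a single feasible matrix for the half-energy formulation \eqref{eq:energy-sdp}, namely a contraction $R$ with $0\preceq R\preceq I$ and $\ip{A}{R}\geq n-\vartheta^-(G)$. The natural candidate comes from Lemma~\ref{lem:cone-normalization} applied to the cone $\cK_+(\overline G)=\{X\succeq0:X_{ij}\leq0\ (ij\in E(\overline G))\}$ with the uniform vector $p=\one/n$.

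First I check that $\cK_+(\overline G)$ satisfies the hypotheses. It is a closed convex cone inside $\Smat_+^V$. It contains all nonnegative diagonal matrices. It is invariant under $X\mapsto\Diag(d)X\Diag(d)$ for $d\geq0$, because this map multiplies $X_{ij}$ by $d_id_j\geq0$ and so preserves the sign conditions.

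Lemma~\ref{lem:cone-normalization} then yields $R\in\cK_+(\overline G)$ with
\[
 \tfrac1n J\preceq R\preceq I,\qquad R\one=\one,\qquad \diag R\in\abl(\ThetaBody^+(\overline G)).
\]
By \eqref{eq:plus-minus-duality}, $\abl(\ThetaBody^+(\overline G))=\ThetaBody^-(G)$. Since $\vartheta^-(G)$ is the support function of $\ThetaBody^-(G)$ at $\one$, this gives $\tr R=\one^T\diag R\leq\vartheta^-(G)$. Also $R\succeq J/n\succeq 0$, so $R$ is feasible in \eqref{eq:energy-sdp}.

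It remains to bound $\ip{A}{R}$. From $R\one=\one$ we get $\ip{J}{R}=n$. Splitting $J=I+A+(J-I-A)$, where $J-I-A$ is the adjacency matrix of $\overline G$, gives
\[
 n=\tr R+\ip{A}{R}+\ip{J-I-A}{R}.
\]
The last term is a sum of entries $R_{ij}$ over edges of $\overline G$, each nonpositive because $R\in\cK_+(\overline G)$. Hence
\[
 \ip{A}{R}\geq n-\tr R\geq n-\vartheta^-(G),
\]
and \eqref{eq:energy-sdp} gives $\cE(G)/2\geq\ip{A}{R}\geq n-\vartheta^-(G)$.

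The only delicate point is choosing the right cone and graph, so that the two sign effects line up. The diagonal of $R$ must be controlled by $\vartheta^-(G)$, which through the antiblocking identity forces the cone $\cK_+$ on the complement. The sign constraint on the non-edges of $G$ then drops out of $\ip{J}{R}$ in the favourable direction.
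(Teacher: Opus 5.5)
Your proposal is correct and follows essentially the same approach as the paper: apply Lemma~\ref{lem:cone-normalization} to $\cK_+(\overline G)$ with $p=\one/n$, use \eqref{eq:plus-minus-duality} to get $\tr R\leq\vartheta^-(G)$, and expand $\ip{J}{R}=n$ with $J=I+A+\overline A$ to conclude. Your explicit check that $\cK_+(\overline G)$ satisfies the lemma's hypotheses is a detail the paper leaves implicit.
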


\begin{proof}
Apply Lemma~\ref{lem:cone-normalization} to $\cK_+(\overline G)$ and $p=\one/n$. It gives a positive semidefinite contraction $R$ with $R\one=\one$, nonpositive entries on $E(\overline G)$, and $\diag R\in\ThetaBody^-(G)$. If $\overline A$ is the adjacency matrix of $\overline G$, then
\[
 \ip{A}{R}
 =\ip{J-I-\overline A}{R}
 =n-\tr R-\ip{\overline A}{R}
 \geq n-\vartheta^-(G).
\]
Use $R$ in~\eqref{eq:energy-sdp}.
\end{proof}

The full inequality~\eqref{eq:fajtlowicz} has been proved in the arXiv preprint of Kumar and Pragada~\cite[Theorem~1.2]{KumarPragada2026}. Since $\alpha(G)\leq\vartheta^-(G)$, their result implies Theorem~\ref{thm:energy}. The contribution here is the short semidefinite proof and the removal of the regularity assumption from the theta-based argument of~\cite{AbiadEtAl2026}.

\section*{Acknowledgments}

The authors acknowledge support from the Funda\c{c}\~ao de Amparo
\`a Pesquisa do Estado de Minas Gerais (FAPEMIG) and the Conselho
Nacional de Desenvolvimento Cient\'ifico e Tecnol\'ogico (CNPq).

\section*{Statement on the use of artificial intelligence}

A direct prompt to Claude Opus 5.5 resulted in a somewhat convoluted solution to the saturation-parameter question treated in Section~\ref{sec:mu}. The authors worked on distilling the key steps and found the combination of entropy splitting and matrix lifting presented in Sections~\ref{sec:normalization} and \ref{sec:stronger}. The authors then proceeded to find the other main applications in this paper. ChatGPT Astra 6 was used to help with the writing of the paper, but the final version was prepared by the authors. It was also responsible for the counterexample in Section~\ref{sec:comparison}, presented as Proposition~\ref{prop:counterexample}. The authors retain full responsibility for the content of the paper.

{\small
\bibliographystyle{abbrvnat}
\bibliography{lifted_entropy_splitting}
}
\end{document}